\providecommand{\U}[1]{\protect\rule{.1in}{.1in}}
\newtheorem{theorem}{Theorem}
\newtheorem{proposition}{Proposition}
\newtheorem{lemma}{Lemma}
\newtheorem*{lemma*}{Lemma}
\newcommand{\E}{\mathbb{E}}
\newcommand{\R}{\mathbb{R}}
\def\R{\mathbb{R}}
\def\R{\mathbb{R}}
\def\E{\mathbb{E}}
\begin{document}
\title[A Random Matrix Approximation for the Non-Commutative Fractional Brownian Motion]{A Random Matrix Approximation for the Non-Commutative Fractional Brownian Motion}
\author[Juan Carlos Pardo]{Juan Carlos Pardo}
\address[Juan Carlos Pardo]{Department of Mathematics, Centro de Investigaci\'on en
Matem\'aticas, Apartado Postal 402, Guanajuato GTO 36000, Mexico.}
\email{jcpardo@cimat.mx}
\author[J. L. P\'erez]{Jos\'e-Luis P\'erez}
\address[J. L. P\'erez]{Department of Probability and Statistics, IIMAS-UNAM, Mexico
City, Mexico.}
\email{garmendia@sigma.iimas.unam.mx }
\author[Victor P\'{e}rez-Abreu]{Victor P\'{e}rez-Abreu}
\address[Victor P\'{e}rez-Abreu]{Department of Mathematics, Centro de Investigaci\'on
en Matem\'aticas, Apartado Postal 402, Guanajuato GTO 36000, Mexico.}
\email{pabreu@cimat.mx}
\thanks{This version: \today. }
\date{}
\maketitle

\begin{abstract}
A functional limit theorem for the empirical measure-valued process of
eigenvalues of a matrix fractional Brownian motion is obtained. It is shown
that the limiting measure-valued process is the non-commutative fractional
Brownian motion recently introduced by Nourdin and Taqqu \cite{NT}. Young and
Skorohod stochastic integral techniques and fractional calculus are the main
tools used.

\textit{Key words and phrases: }Matrix fractional Brownian motion, measure
valued process, free probability, Young integral, fractional calculus.

\end{abstract}

\section{Introduction and main result}

Motivated by the fact that there is often a close correspondence between
classical probability and free probability, Nourdin and Taqqu
\cite{NT} recently introduced the non-commutative fractional Brownian motion (ncfBm).
It appears as the limiting process in a central limit theorem for long
range dependence time series in free probability, in analogy to the classical
probability case (see \cite{T}, for example). A ncfBm of Hurst parameter
$H\in(0,1)$ is a centered semicircular process $S^{H}=\left\{  S_{t}%
^{H}\right\}  _{t\geq0}$ in a non-commutative probability space $(\mathcal{A}%
,\varphi)$ with covariance function
\begin{equation}
\varphi(S_{t}^{H}S_{s}^{H})=\frac{1}{2}\left(  t^{2H}+s^{2H}-|t-s|^{2H}%
\right)  . \label{CFfBm}%
\end{equation}

The case $S^{1/2}$ is the well known free Brownian motion introduced in
\cite{B}. The ncfBm $S^{H}$ is the only standardized semicircular process
which is self-similar and has stationary increments. For the study of the
ncfBm and the required free probability framework, we refer to Section 2 in
\cite{NT} or Chapter 8 in \cite{No}. In the present paper, we will 
deal mainly with the law $(\mu_{t}^{H})_{t\geq0}$ of a ncfBm instead of the
non-commutative process.

Ever since the seminal paper by Voiculescu \cite{vu}, it has been well known that
free probability is a convenient framework for investigating the limits of
the spectral distributions of random matrices (see for instance Section 5.4 in 
Anderson, Guionnet and Zeitouni \cite{A}). On the functional asymptotic
behavior side, Biane \cite{B} proved that the free Brownian motion $S^{1/2}$
appears as the measure-valued process limit of $n\times n$ Hermitian matrix
Brownian motions with size $n$ going to infinity. Roughly speaking, this
result gives a realization of the free Brownian motion $S^{1/2}$ as the
spectral limit of well known matrix-valued processes.

On the other hand, for a fixed dimension $n,$ the matrix-valued fractional
Brownian motion was recently studied by Nualart and P\'{e}rez-Abreu \cite{AN}.
It was shown that its corresponding eigenvalue process is non-colliding almost
surely and a Skorohod stochastic differential equation governing this process
was established.

The main purpose of the present paper is to show that the ncfBm $S^{H}$ has a
realization as the measure-valued process limit of $n\times n$ matrix
fractional Brownian motions, as the size $n$ goes to infinity. This gives a
correspondence between classical fractional Brownian motion and
non-commutative fractional Brownian motion. Our method uses the Skorohod and
Young stochastic calculus for a multidimensional fractional Brownian motion as
well as the fractional calculus. It is important to note that our methodology does not apply  to the case $H=1/2$ of the
free Brownian motion.

More precisely, let us consider a family of independent fractional Brownian
motions starting from $0$ with Hurst parameter $H\in(1/2,1)$, $b=\{\{b_{ij}(t),t\geq0\},1\leq
i\leq j\leq n\}$, and define the symmetric matrix fractional Brownian motion
of dimension $n\times n$ by $B(t)$ by $B_{ij}(t)=b_{ij}(t)$ for $i<j$,
and $B_{ii}(t)=\sqrt{2}b_{ii}(t)$.

As we are interested in functional limit theorems for the eigenvalues of the
fractional Brownian motion, for $n\geq1$ we will consider the following
sequence of renormalized processes $\{B^{(n)}(t)\}_{t\geq0}$, given by
\[
B^{(n)}(t)=\frac{1}{\sqrt{n}}B(t),\qquad\text{for $t>0$.}%
\]
Following \cite{AN}, it is possible to apply the chain rule to the Young
integral to obtain the following equation for the eigenvalues of the process
$B^{(n)}$
\begin{equation}
\lambda_{i}^{(n)}(t)=\frac{1}{\sqrt{n}}\sum_{k\leq h}%
\int_{0}^{t}\frac{\partial\Phi_{i}^{(n)}}{\partial b_{kh}^{(n)}}(b(s))\circ
db_{kh}(s), \label{ito1}%
\end{equation}
for any $t>0$, $i=1,\dots,d$, and where $\Phi
_{i}^{(n)}=\lambda_{i}^{(n)}$. Observe 
\begin{equation}
\frac{\partial\Phi_{i}^{(n)}}{\partial b_{kh}^{(n)}}=2u_{ik}^{(n)}u_{ih}%
^{(n)}1_{\{k\not =h\}}+(u_{ik}^{(n)})^{2}1_{\{k=h\}} \label{eig}%
\end{equation}
where $u_{ik}^{(n)}$ denotes the $k$-th coordinate of the $i$-th eigenvector
of the matrix $B^{(n)}$.

The empirical measure-valued process which will be related to the functional
limit theorems is
\begin{equation}
\mu_{t}^{(n)}=\frac{1}{n}\sum_{j=1}^{n}\delta_{\lambda_{j}^{(n)}(t)}%
,\qquad\text{$t\geq0$,} \label{em}%
\end{equation}
where $\delta_{x}$ denotes the unit mass at $x$. From the celebrated Wigner theorem
in random matrix theory, one has that for each fixed $t>0$, $\mu_{t}^{(n)}$
converges a.s. to $\mu_{t}^{sc}$, the Wigner semicircle distribution of
parameter $t$:
\[
\mu_{t}^{sc}(dx)=\frac{1}{2\pi t}\sqrt{4t-x^{2}}1_{[-2\sqrt{t},2\sqrt{t}%
]}(x)dx,
\]
see for instance \cite{Me}, \cite{vu}, \cite{wi}.

The main result of this paper, stated in the framework of \cite{CL} and \cite{RS}, 
is the following functional limit theorem for
the empirical spectral measure-valued processes $\{(\mu_{t}^{(n)})_{t\geq
0}:n\geq1\}$ converging to the ncfBm. \ Let $\Pr(\mathbb{R})\ $be the space of
probability measures on $\mathbb{R}$ endowed with the topology of weak
convergence and let $C\left(  \mathbb{R}_{+},\Pr(\mathbb{R})\right)  \ $be the
space of continuous functions from $\mathbb{R}_{+}\ $into$\ \Pr(\mathbb{R}),$
endowed with the topology of uniform convergence on compact intervals of
$\mathbb{R}_{+}.$\ 

\smallskip

\begin{theorem}\label{limit2}
The family of measure-valued processes $\{(\mu_{t}^{(n)})_{t\geq0}%
:n\geq1\}$ converges weakly in $C(\mathbb{R}_{+},\mathrm{Pr}(\mathbb{R}))$ to the family $(\mu_{t})_{t\geq0}$ that corresponds to the law of a
non-commutative fractional Brownian motion of Hurst parameter $H\in(1/2,1)$
and covariance (\ref{CFfBm}). 
\end{theorem}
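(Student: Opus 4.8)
The plan is to adapt Biane's strategy for the Hermitian Brownian motion to the fractional setting, replacing It\^o calculus by Young and Skorohod integration adapted to $H>1/2$. First I would fix a smooth test function $f$ (polynomials, or the resolvent $f(x)=(z-x)^{-1}$ for $\mathrm{Im}\,z\neq 0$) and write $\langle \mu^{(n)}_t,f\rangle=\frac1n\sum_{i=1}^n f(\lambda^{(n)}_i(t))$. Applying the chain rule for the Young integral together with (\ref{ito1})--(\ref{eig}) produces an evolution equation for $\langle\mu^{(n)}_t,f\rangle$ driven by the underlying fractional Brownian motions $b_{kh}$. The essential algebraic simplification is that, after summing over $i$, the eigenvector contributions collapse through the orthonormality relations $\sum_k (u^{(n)}_{ik})^2=1$ and $\sum_i u^{(n)}_{ik}u^{(n)}_{ih}=\delta_{kh}$, so that the deterministic part of the equation depends only on the eigenvalues.

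Since fractional Brownian motion with $H>1/2$ is not a semimartingale, the next step is to rewrite each Young integral as a Skorohod integral plus the fractional-calculus correction term, using the standard relation with the Malliavin derivative and the kernel $H(2H-1)|s-r|^{2H-2}$. The Skorohod part is centered and, because the driving $b_{kh}$ are independent, its $L^2$-norm is $O(n^{-1/2})$ and hence vanishes in the limit. Upon carrying out the integration $\int_0^s H(2H-1)|s-r|^{2H-2}\,dr=Hs^{2H-1}$ and invoking Wigner's theorem for the one-dimensional marginals, the correction term converges to the deterministic drift $H\int_0^t s^{2H-1}\iint\frac{f'(x)-f'(y)}{x-y}\,\mu_s(dx)\mu_s(dy)\,ds$, which recovers Biane's free-diffusion drift at $H=1/2$ but carries the time-dependent rate $Hs^{2H-1}$ coming from the fBm covariance. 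I expect this fractional bookkeeping --- controlling the Malliavin derivatives $D_r\lambda^{(n)}_i(s)$ uniformly in $n$ and showing the remaining error terms are $o(1)$ --- to be the most delicate part of the argument.

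To establish tightness in $C(\mathbb{R}_+,\Pr(\mathbb{R}))$ I would prove compact containment from uniform-in-$n$ moment bounds on the empirical distribution (controlling the spectral edge, whose limit is $\pm 2t^H$), and then an equicontinuity estimate for $t\mapsto\langle\mu^{(n)}_t,f\rangle$ uniform in $n$. As the usual martingale tightness criteria are unavailable here, this estimate has to be obtained directly from the Young/Skorohod representation, using the H\"older continuity of the fBm paths of any order below $H$ together with a Garsia--Rodemich--Rumsey or Kolmogorov moment bound for the stochastic integrals.

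Finally, any weak limit point $(\mu_t)_{t\ge0}$ satisfies the deterministic integral equation above, which is now a genuine characterization since the fluctuation term has disappeared. I would identify the limit by passing to the Cauchy--Stieltjes transform $G(t,z)=\langle\mu_t,(z-\cdot)^{-1}\rangle$: the integral equation becomes the complex Burgers equation $\partial_t G=-2Ht^{2H-1}\,G\,\partial_z G$ with initial condition $G(0,z)=1/z$, whose unique solution is the Stieltjes transform of the semicircle law of variance $t^{2H}$. Since this semicircle family is exactly the law $(\mu^H_t)$ of the non-commutative fractional Brownian motion with covariance (\ref{CFfBm}), uniqueness of the limit follows and the proof is complete.
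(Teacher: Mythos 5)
Your outline follows the same architecture as the paper's proof (chain rule for the Young integral, Young-to-Skorohod conversion producing the drift $H s^{2H-1}\iint \frac{f'(x)-f'(y)}{x-y}\,\mu_s(dx)\mu_s(dy)$, tightness, identification of the limit through the Cauchy--Stieltjes transform and a Burgers-type equation), but it has two genuine gaps. The first and most serious is the claim that the Skorohod fluctuation term has $L^2$-norm $O(n^{-1/2})$ ``because the driving $b_{kh}$ are independent.'' For $H>1/2$ the Skorohod integral is not an isometry: by Meyer's inequality its $L^p$-norm is controlled by the $L^p$-norms of the integrand \emph{and of its Malliavin derivative}, and the Malliavin derivative of $f'(\Phi_i^{(n)})\frac{\partial \Phi_i^{(n)}}{\partial b_{kh}^{(n)}}$ brings in the second derivatives of the eigenvalues, i.e.\ the singular sums $\sum_{j\neq i}|\lambda_i^{(n)}-\lambda_j^{(n)}|^{-1}$. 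Independence of the entries gives you nothing here. The paper's key technical input is precisely an eigenvalue-repulsion estimate, $\mathbb{E}\big(|\lambda_i^{(n)}(t)-\lambda_{i+1}^{(n)}(t)|^{-p}\big)=t^{-pH}O(n^{p-2})$ for $p\in(1/H,2)$, proved via self-similarity and the explicit GOE two-point correlation kernel (Mehta's sine-kernel asymptotics). You flag this as ``the most delicate part'' but propose no mechanism for it; without such a bound the fluctuation term is not shown to vanish and the argument does not close. (Incidentally, for tightness the paper avoids estimating the stochastic integrals altogether: the Hoffman--Wielandt inequality bounds $\sum_i|\lambda_i^{(n)}(t_2)-\lambda_i^{(n)}(t_1)|^2$ by the squared Frobenius norm of the Gaussian increment, giving $\mathbb{E}\big|\langle\mu_{t_2}^{(n)},f\rangle-\langle\mu_{t_1}^{(n)},f\rangle\big|^4\le C\|f'\|_\infty^4|t_2-t_1|^{4H}$ directly; your Garsia--Rodemich--Rumsey route on the integrals would circle back to the same missing estimate.)

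The second gap is in the identification. The Burgers equation only pins down the one-time marginals $\mu_t$ as semicircle laws of variance $t^{2H}$; it says nothing about the covariance (\ref{CFfBm}), which is a statement about the joint two-time structure and is part of the theorem. Infinitely many semicircular families share these one-dimensional marginals. The paper completes the identification by applying Voiculescu's theorem to the linear combinations $\sum_i\lambda_i B^{(n)}(t_i)$, showing every such combination has a semicircular limit (hence the limit is the law of a semicircular process), and then computing $\lim_{n\to\infty}\mathbb{E}\big[\tfrac1n\mathbf{tr}\big(B^{(n)}(t)B^{(n)}(s)\big)\big]=\tfrac12\big(t^{2H}+s^{2H}-|t-s|^{2H}\big)$. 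Your final sentence asserts rather than proves this multi-time structure; you need to add the analogue of this step.
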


The case $H=1/2$ of the free Brownian motion is known, see for instance
\cite{CG}, \cite{CL}, \cite{AT0}, and \cite{RS}. The proof of Theorem \ref{limit2} is for $H\in(1/2,1)$
\ and it is done using results  about the Young stochastic integral as well as fine
estimations based on the fractional calculus.

The rest of this paper is organized as follows. In Section 2 we derive the stochastic
evolution of the empirical measure of the eigenvalues of the matrix fractional
Brownian motion. In Section 3 we prove that the family $\{(\mu_{t}^{n}%
)_{t\geq0}:n\geq1\}$ is tight in $C(\mathbb{R}_{+},\mathrm{Pr}(\mathbb{R}))$.
This is achieved by estimations of Young integrals by means of the fractional
calculus. In Section 4, we show that the weak limit $\{(\mu_{t})_{t\geq0}\}$, of the sequence of
measure-valued processes $\{(\mu_{t}^{(n)})_{t\geq0}:n\geq1\}$, satisfies a
measure-valued equation. In Section 5 we prove that the
deterministic process $\{(\mu_{t})_{t\geq0}\}$ corresponds to the law of a non-commutative fractional Brownian motion. 
For this we show, using results in \cite{vu}, that the process has  semicircular
finite-dimensional distributions, and covariance given by (\ref{CFfBm}).

\par For preliminaries on the stochastic calculus with respect to fractional Brownian
motion, we refer to \cite{nose}, \cite{No} and \cite{N}.

\section{The stochastic evolution of the empirical measure of the eigenvalues of a
matrix fractional Brownian motion}

As is usual, for a probability measure $\mu$ and a $\mu$-integrable
function $f$, we use the notation $\left\langle \mu,f\right\rangle =\int
f(x)\mu(\mathrm{d}x).$ Hence noting that the empirical measure $\{(\mu_t)_{t\geq0}\}$ is a point
measure, we have that for $f\in C_{b}^{2}$
\begin{equation}\label{nrem}
\langle\mu_{t}^{(n)},f\rangle=\frac{1}{n}\sum_{i=1}^{n}f(\lambda_{i}%
^{(n)}(t)).
\end{equation}
Therefore, applying the chain rule to the last equation,
\begin{equation}
\langle\mu_{t}^{(n)},f\rangle=\langle\mu_{0}^{(n)},f\rangle+\frac{1}{n}%
\sum_{i=1}^{n}\int_{0}^{t}f^{\prime}(\Phi^{n}_i(b(s)))\circ d\lambda_{i}%
^{(n)}(s),\qquad\text{$t\geq0$.} \label{emf}%
\end{equation}
In order to consider the evolution of the measure-valued process $\{\mu
_{t}^{(n)}:t\geq0\}$, we prove the following result.

\begin{lemma}
Let $\{\mu_{t}^{(n)}:t\geq0\}$ be the empirical measure-valued process of the
eigenvalues of the matrix fractional Brownian motion $(B^{(n)})_{t\geq0}$.  Then
for each $f\in C_{b}^{2}(\mathbb{R})$ and $t\geq0$ we have
\begin{align}
\langle\mu_{t}^{(n)},f\rangle &  =\langle\mu_{0}^{(n)},f\rangle+\frac
{1}{n^{3/2}}\sum_{i=1}^{n}\sum_{k\leq h}\int_{0}^{t}f^{\prime}(\Phi
^{n}_i(b(s)))\frac{\partial\Phi_{i}^{(n)}}{\partial b_{kh}^{(n)}}(b(s))\delta
b_{kh}(s)\nonumber\label{Ident1}\\
&  +H\int_{0}^{t}\int_{\mathbb{R}^{2}}\frac{f^{\prime}(x)-f^{\prime}(y)}%
{x-y}s^{2H-1}\mu_{s}^{(n)}(dx)\mu_{s}^{(n)}(dy)ds\nonumber\\
&  +\frac{H}{n^{2}}\sum_{i=1}^{n}\int_{0}^{t}f^{\prime\prime}(\Phi
^{n}_i(b(s)))s^{2H-1}ds.
\end{align}

\end{lemma}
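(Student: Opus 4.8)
The plan is to feed the eigenvalue dynamics \eqref{ito1} into the Young chain-rule identity \eqref{emf} and then to exchange each symmetric (Young) integral for a Skorohod integral, the price being a fractional It\^o correction that will produce the last two summands. Since for $H\in(1/2,1)$ the Young integral obeys the ordinary chain rule and coincides with the Stratonovich integral for the smooth integrands at hand (see \cite{AN}), substituting $\circ\,d\lambda_i^{(n)}(s)=\frac1{\sqrt n}\sum_{k\le h}\frac{\partial\Phi_i^{(n)}}{\partial b_{kh}^{(n)}}(b(s))\circ db_{kh}(s)$ into \eqref{emf} gives at once
$$\langle\mu_t^{(n)},f\rangle=\langle\mu_0^{(n)},f\rangle+\frac1{n^{3/2}}\sum_{i=1}^n\sum_{k\le h}\int_0^t f'(\Phi_i^n(b(s)))\,\frac{\partial\Phi_i^{(n)}}{\partial b_{kh}^{(n)}}(b(s))\circ db_{kh}(s).$$
I abbreviate the integrand by $g_{kh}^i(s)=f'(\Phi_i^n(b(s)))\frac{\partial\Phi_i^{(n)}}{\partial b_{kh}^{(n)}}(b(s))$, which is a smooth function of the current matrix value $b(s)$.

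Next I would convert each Young integral into a Skorohod integral. The tool from the fractional calculus (see \cite{nose},\cite{No},\cite{N}) is the relation, valid for $H>1/2$,
$$\int_0^t g_{kh}^i(s)\circ db_{kh}(s)=\int_0^t g_{kh}^i(s)\,\delta b_{kh}(s)+\alpha_H\int_0^t\!\!\int_0^t D_r^{kh}g_{kh}^i(s)\,|s-r|^{2H-2}\,dr\,ds,\qquad \alpha_H=H(2H-1).$$
Because $g_{kh}^i(s)$ is a function of $b(s)$ only, its Malliavin derivative is $D_r^{kh}g_{kh}^i(s)=\frac{\partial g_{kh}^i(s)}{\partial b_{kh}}\mathbf 1_{[0,s]}(r)$, so the inner integral collapses through $\int_0^s(s-r)^{2H-2}dr=\frac{s^{2H-1}}{2H-1}$; with $\alpha_H/(2H-1)=H$ and after bookkeeping the normalizations (the weights $c_{kh}^2$, equal to $1$ off the diagonal and to $2$ on it because $B_{ii}=\sqrt2\,b_{ii}$ and $B^{(n)}=B/\sqrt n$), the contribution of the pair $(k,h)$ reduces to $\frac{c_{kh}^2 H}{n^2}\int_0^t\frac{\partial g_{kh}^i}{\partial b_{kh}^{(n)}}(b(s))\,s^{2H-1}\,ds$. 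The Skorohod pieces reassemble into the first summand of the identity, and it remains to evaluate the sum of the corrections.

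Then I would expand $\frac{\partial g_{kh}^i}{\partial b_{kh}^{(n)}}$ by the product rule into an $f''$-part $f''(\Phi_i^n)\big(\frac{\partial\Phi_i^{(n)}}{\partial b_{kh}^{(n)}}\big)^2$ and an $f'$-part $f'(\Phi_i^n)\frac{\partial^2\Phi_i^{(n)}}{(\partial b_{kh}^{(n)})^2}$, and carry out the sums over $k\le h$ by exploiting the orthonormality of the eigenvectors. Using the first- and second-order perturbation formulas for the eigenvalues of the symmetric matrix $B^{(n)}$, together with $\sum_k u_{ik}^2=1$ and $\sum_k u_{ik}u_{jk}=0$ for $i\ne j$, I expect the two weighted sums to collapse to the constants
$$\sum_{k\le h}c_{kh}^2\Big(\frac{\partial\lambda_i^{(n)}}{\partial B^{(n)}_{kh}}\Big)^2=2,\qquad \sum_{k\le h}c_{kh}^2\,\frac{\partial^2\lambda_i^{(n)}}{(\partial B^{(n)}_{kh})^2}=2\sum_{j\ne i}\frac1{\lambda_i^{(n)}-\lambda_j^{(n)}}.$$
This turns the total correction into $\frac{2H}{n^2}\sum_i\int_0^t f''(\Phi_i^n)s^{2H-1}ds+\frac{2H}{n^2}\sum_{i\ne j}\int_0^t\frac{f'(\lambda_i^{(n)})}{\lambda_i^{(n)}-\lambda_j^{(n)}}s^{2H-1}ds$.

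Finally I would recognize the off-diagonal double sum as an empirical double integral. Antisymmetrizing, $\frac{2}{n^2}\sum_{i\ne j}\frac{f'(\lambda_i^{(n)})}{\lambda_i^{(n)}-\lambda_j^{(n)}}=\frac1{n^2}\sum_{i\ne j}\frac{f'(\lambda_i^{(n)})-f'(\lambda_j^{(n)})}{\lambda_i^{(n)}-\lambda_j^{(n)}}$, which is exactly the double integral $\int\!\!\int\frac{f'(x)-f'(y)}{x-y}\mu_s^{(n)}(dx)\mu_s^{(n)}(dy)$ with its diagonal $x=y$ term, namely $\frac1{n^2}\sum_i f''(\lambda_i^{(n)})$, removed. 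Adding that diagonal term back to complete the double integral costs one copy of $\frac{H}{n^2}\sum_i\int_0^t f''(\Phi_i^n)s^{2H-1}ds$, which lowers the coefficient of the $f''$-part from $2$ to $1$ and yields precisely the stated identity. The main obstacle is the passage of the middle paragraph: justifying the Young-to-Skorohod conversion for an integrand assembled from the eigenvalues and eigenvectors of $B^{(n)}$ — verifying that each $g_{kh}^i$ lies in the domain of the divergence operator and is smooth in the Malliavin sense, which relies on the a.s.\ simplicity of the spectrum (non-colliding eigenvalues) established in \cite{AN} so that the perturbation expansions and the trace term are licit — while keeping the $\sqrt2$ and $n^{-1/2}$ normalizations consistent so that the weighted sums collapse to the clean constants above.
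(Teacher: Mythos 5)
Your proposal is correct and follows essentially the same route as the paper: substitute \eqref{ito1} into \eqref{emf}, convert each Young integral to a Skorohod integral via the Al\'os--Nualart trace-term formula (Proposition 3 in \cite{EN}), collapse the trace term using $\int_0^s(s-r)^{2H-2}dr=s^{2H-1}/(2H-1)$ together with the identities $\sum_{k\le h}(\partial\Phi_i^{(n)}/\partial b_{kh}^{(n)})^2=2$ and $\sum_{k\le h}\partial^2\Phi_i^{(n)}/\partial(b_{kh}^{(n)})^2=2\sum_{j\ne i}(\lambda_i^{(n)}-\lambda_j^{(n)})^{-1}$ from \cite{AN}, and then symmetrize, absorbing the diagonal of the double integral into the $f''$ term. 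The "main obstacle" you flag (membership in the domain of the divergence) is exactly what the paper checks, by bounding the $f''$-part via $|\partial\Phi_i^{(n)}/\partial b_{kh}^{(n)}|\le 2$ and the $f'$-part via the moment estimate (5.6) of \cite{AN} to get a.s.\ finiteness of the trace integral.
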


\begin{proof}
First we note that using (\ref{emf}) and (\ref{ito1}) we obtain
\[
\langle\mu_{t}^{(n)},f\rangle=\langle\mu_{0}^{(n)},f\rangle+\frac{1}{n^{3/2}%
}\sum_{i=1}^{n}\sum_{k\leq h}\int_{0}^{t}f^{\prime}(\Phi^{n}_i(b(s)))\frac
{\partial\Phi_{i}^{(n)}}{\partial b_{kh}^{(n)}}(b(s))\circ db_{kh}(s).
\]
Now we will be interested in replacing the Young integrals by Skorohod
integrals in the above expression. To this end, we will prove that the
condition of Proposition 3 in \cite{EN} is satisfied. We will denote by $D^{kh}$ the Malliavin derivative with respect to $b_{kh}$, for each $1\leq k\leq h\leq n$.

First note that
\begin{align}
\int_{0}^{t}\int_{0}^{t}D^{kh}_r &  \bigg(f^{\prime}(\Phi^{n}_i(b(s)))\frac
{\partial\Phi_{i}^{(n)}}{\partial b_{kh}^{(n)}}(b(s))\bigg)|s-r|^{2H-2}%
drds\nonumber\\
&  =\frac{1}{2H-1}\int_{0}^{t}f^{\prime\prime}(\Phi^{n}_i(b(s)))\left(  \frac
{\partial\Phi_{i}^{(n)}}{\partial b_{kh}^{(n)}}(b(s))\right)  ^{2}%
s^{2H-1}ds\nonumber\\
&  +\frac{1}{2H-1}\int_{0}^{t}f^{\prime}(\Phi^{n}_i(b(s)))\frac{\partial^{2}%
\Phi_{i}^{(n)}}{\partial(b_{kh}^{(n)})^{2}}(b(s))s^{2H-1}ds.\nonumber
\end{align}
Therefore, using (\ref{eig}),
\[
\bigg|\frac{\partial\Phi_{i}^{(n)}}{\partial b_{kh}^{(n)}}(b(s))\bigg|\leq2
\]
and so
\[
\left\vert \int_{0}^{t}f^{\prime\prime}(\Phi^{n}_i(b(s)))\left(  \frac
{\partial\Phi_{i}^{(n)}}{\partial b_{kh}^{(n)}}(b(s))\right)  ^{2}%
s^{2H-1}ds\right\vert \leq\frac{4}{2H}\Vert f^{\prime\prime}\Vert_{\infty
}t^{2H}<\infty.
\]
On the other hand, using (5.6) in \cite{AN}, we obtain
\begin{align}
\mathbb{E}\left(  \left\vert \int_{0}^{t}f^{\prime}(\Phi^{n}_i(b(s)))\frac
{\partial^{2}\Phi_{i}^{(n)}}{\partial(b_{kh}^{(n)})^{2}}(b(s))s^{2H-1}%
ds\right\vert \right)   &  \leq\Vert f^{\prime}\Vert_{\infty}\mathbb{E}\left(
\int_{0}^{t}\left\vert \frac{\partial^{2}\Phi_{i}^{(n)}}{\partial(b_{kh}%
^{(n)})^{2}}(b(s))\right\vert s^{2H-1}ds\right)  \nonumber\\
&  =\Vert f^{\prime}\Vert_{\infty}\int_{0}^{t}\mathbb{E}\left(  \left\vert
\frac{\partial^{2}\Phi_{i}^{(n)}}{\partial(b_{kh}^{(n)})^{2}}(b(s))\right\vert
\right)  s^{2H-1}ds\nonumber\\
&  \leq C_{1}\int_{0}^{t}s^{H-1}ds=\frac{C_{1}}{H}t^{H}<\infty.\nonumber
\end{align}
Therefore, we can conclude that
\[
\left\vert \int_{0}^{t}f^{\prime}(\Phi^{n}_i(b(s)))\frac{\partial^{2}\Phi
_{i}^{(n)}}{\partial(b_{kh}^{(n)})^{2}}(b(s))s^{2H-1}ds\right\vert
<\infty\qquad\text{$\mathbb{P}$ a.s.}%
\]
So, putting the pieces together, we obtain that
\[
\int_{0}^{t}\int_{0}^{t}D^{kh}_r\bigg(f^{\prime}(\Phi^{n}_i(b(s)))\frac
{\partial\Phi_{i}^{(n)}}{\partial b_{kh}^{(n)}}(b(s))\bigg)|s-r|^{2H-2}%
drds<\infty\qquad\text{$\mathbb{P}$ a.s.}%
\]
Therefore, by Proposition 3 in \cite{EN} (see also Proposition 5.2.3 in
\cite{N}), we can express the Young integrals that appear in (\ref{ito1}) in
terms of Skorohod integrals. Therefore,
\begin{align}
\langle &  \mu_{t}^{(n)},f\rangle=\langle\mu_{0}^{(n)},f\rangle+\frac
{1}{n^{3/2}}\sum_{i=1}^{n}\sum_{k\leq h}\int_{0}^{t}f^{\prime}(\Phi
^{n}_i(b(s)))\frac{\partial\Phi_{i}^{(n)}}{\partial b_{kh}^{(n)}}(b(s))\delta
b_{kh}(s)\nonumber\\
&  +\frac{H(2H-1)}{n^{2}}\sum_{i=1}^{n}\sum_{k\leq h}\int_{0}^{t}\int_{0}%
^{t}D^{kh}_r\bigg(f^{\prime}(\Phi^{n}_i(b(s)))\frac{\partial\Phi_{i}^{(n)}%
}{\partial b_{kh}^{(n)}}(b(s))\bigg)|s-r|^{2H-2}drds\nonumber\\
&  =\langle\mu_{0}^{(n)},f\rangle+\frac{1}{n^{3/2}}\sum_{i=1}^{n}\sum_{k\leq
h}\int_{0}^{t}f^{\prime}(\Phi^{n}_i(b(s)))\frac{\partial\Phi_{i}^{(n)}}{\partial
b_{kh}^{(n)}}(b(s))\delta b_{kh}(s)\nonumber\\
&  +\frac{H}{n^{2}}\sum_{i=1}^{n}\sum_{k\leq h}\int_{0}^{t}f^{\prime\prime
}(\Phi^{n}_i(b(s)))\left(  \frac{\partial\Phi_{i}^{(n)}}{\partial b_{kh}^{(n)}%
}(b(s))\right)  ^{2}s^{2H-1}ds\nonumber\\
&  +\frac{H}{n^{2}}\sum_{i=1}^{n}\sum_{k\leq h}\int_{0}^{t}f^{\prime}(\Phi
^{n}_i(b(s)))\frac{\partial^{2}\Phi_{i}^{(n)}}{\partial(b_{kh}^{(n)})^{2}%
}(b(s))s^{2H-1}ds.\nonumber
\end{align}
On the other hand in p. 4280 of \cite{AN} we can find the following relation
\begin{equation}
\sum_{k\leq h}\left(  \frac{\partial\Phi_{i}^{(n)}}{\partial b_{kh}^{(n)}%
}(b(s))\right)  ^{2}=2.\label{om}%
\end{equation}
Hence, using (\ref{om}),
\begin{align}
\langle\mu_{t}^{(n)},f\rangle &  =\langle\mu_{0}^{(n)},f\rangle+\frac
{1}{n^{3/2}}\sum_{i=1}^{n}\sum_{k\leq h}\int_{0}^{t}f^{\prime}(\Phi
^{n}_i(b(s)))\frac{\partial\Phi_{i}^{(n)}}{\partial b_{kh}^{(n)}}(b(s))\delta
b_{kh}(s)\nonumber\\
&  +\frac{2H}{n^{2}}\sum_{i=1}^{n}\int_{0}^{t}f^{\prime\prime}(\Phi
^{n}_i(b(s)))s^{2H-1}ds+\frac{2H}{n^{2}}\sum_{i=1}^{n}\sum_{j\not =i}\int
_{0}^{t}\frac{f^{\prime}(\Phi^{n}_i(b(s)))}{\lambda_{i}^{(n)}(s)-\lambda
_{j}^{(n)}(s)}s^{2H-1}ds\nonumber\\
&  =\langle\mu_{0}^{(n)},f\rangle+\frac{1}{n^{3/2}}\sum_{i=1}^{n}\sum_{k\leq
h}\int_{0}^{t}f^{\prime}(\Phi^{n}_i(b(s)))\frac{\partial\Phi_{i}^{(n)}}{\partial
b_{kh}^{(n)}}(b(s))\delta b_{kh}(s)\nonumber\\
&  +H\int_{0}^{t}\int_{\mathbb{R}^{2}}\frac{f^{\prime}(x)-f^{\prime}(y)}%
{x-y}s^{2H-1}\mu_{s}^{(n)}(dx)\mu_{s}^{(n)}(dy)ds\nonumber\\
&  +\frac{H}{n^{2}}\sum_{i=1}^{n}\int_{0}^{t}f^{\prime\prime}(\Phi
^{n}_i(b(s)))s^{2H-1}ds.\notag
\end{align}
Here, in the third equality, we used the identity
\[
\sum_{k\leq h}\frac{\partial^{2}\Phi_{i}^{(n)}}{\partial(b_{kh}^{(n)})^{2}%
}(b(s))=2\sum_{j\not =i}\frac{1}{\lambda_{i}^{(n)}(s)-\lambda_{j}^{(n)}(s)}.
\]
(See, for instance, p. 4279 in \cite{AN}).
\end{proof}

\section{Tightness of the family of laws $\{\mu^{(n)}_{t}:t\geq0\}$}

In this section we will prove that the family of the laws of the measured-valued
processes $\{(\mu_{t}^{(n)})_{t\geq0}:n\geq1\}$ is tight in the space
$C(\mathbb{R}_{+},\text{Pr}(\mathbb{R}))$.

\begin{proposition}
The family of measures $\{(\mu_{t}^{(n)})_{t\geq0}:n\geq1\}$ is tight.
\end{proposition}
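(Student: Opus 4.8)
The plan is to use the tightness criterion for $\mathrm{Pr}(\mathbb{R})$-valued processes in the framework of \cite{CL} and \cite{RS}, which reduces the problem to two ingredients: (i) for each fixed $t\ge 0$ the laws of the random measures $\{\mu_t^{(n)}\}_{n\ge1}$ form a tight family in $\mathrm{Pr}(\mathbb{R})$, and (ii) for every $f$ in a countable convergence-determining family $\mathcal{C}\subset C_b^2(\mathbb{R})$, the real-valued processes $\{\langle\mu_\cdot^{(n)},f\rangle\}_{n\ge1}$ are tight in $C(\mathbb{R}_+,\mathbb{R})$. For (i) I would control the second moment of the empirical spectral distribution, $\langle\mu_t^{(n)},x^2\rangle=\tfrac{1}{n}\mathrm{Tr}((B^{(n)}(t))^2)$, and show $\sup_n\mathbb{E}\,\langle\mu_t^{(n)},x^2\rangle<\infty$ for each $t$ (a direct computation gives $\tfrac{1}{n^2}(n^2+n)t^{2H}\to t^{2H}$); by Markov's inequality this yields uniform control of the tails and hence tightness of the one-dimensional marginals. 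The bulk of the work is therefore point (ii).

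For (ii) I would start from the evolution equation of the Lemma, writing $\langle\mu_t^{(n)},f\rangle=\langle\mu_0^{(n)},f\rangle+M_t^{(n),f}+A_t^{(n),f}+R_t^{(n),f}$, where $M^{(n),f}$ is the Skorohod-integral term $\tfrac{1}{n^{3/2}}\sum_i\sum_{k\le h}\int_0^t f'(\Phi^n_i(b(s)))\tfrac{\partial\Phi_i^{(n)}}{\partial b_{kh}^{(n)}}(b(s))\delta b_{kh}(s)$, the term $A^{(n),f}$ is the double-integral drift $H\int_0^t\!\int_{\mathbb{R}^2}\tfrac{f'(x)-f'(y)}{x-y}s^{2H-1}\mu_s^{(n)}(dx)\mu_s^{(n)}(dy)\,ds$, and $R^{(n),f}$ is the $O(1/n)$ remainder $\tfrac{H}{n^2}\sum_i\int_0^t f''(\Phi^n_i(b(s)))s^{2H-1}ds$. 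Since $\mu_0^{(n)}=\delta_0$, the initial term equals $f(0)$ and is deterministic. The two drift terms are handled directly: using $\big|\tfrac{f'(x)-f'(y)}{x-y}\big|\le\|f''\|_\infty$ and $\int_s^t r^{2H-1}dr=\tfrac{1}{2H}(t^{2H}-s^{2H})$, one obtains, for $0\le s\le t\le T$, the bounds $|A_t^{(n),f}-A_s^{(n),f}|\le\tfrac12\|f''\|_\infty(t^{2H}-s^{2H})$ and $|R_t^{(n),f}-R_s^{(n),f}|\le\tfrac{1}{2n}\|f''\|_\infty(t^{2H}-s^{2H})$; since $2H>1$, on compacts $t^{2H}-s^{2H}\le 2HT^{2H-1}(t-s)$, so both terms are uniformly Lipschitz in time and cause no difficulty.

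The main obstacle is the stochastic integral term $M^{(n),f}$. Here I would estimate the increments $M_t^{(n),f}-M_s^{(n),f}$ by the fractional-calculus bounds for Young and Skorohod integrals with respect to fractional Brownian motion (as in \cite{N} and \cite{AN}), aiming at a moment estimate $\mathbb{E}|M_t^{(n),f}-M_s^{(n),f}|^{p}\le C_{T,f}\,|t-s|^{pH}$, uniform in $n$, for some $p$ with $pH>1$. Concretely, applying the Skorohod isometry in the Hilbert space with kernel $H(2H-1)|r-r'|^{2H-2}$, the diagonal part is controlled using $\big|\tfrac{\partial\Phi_i^{(n)}}{\partial b_{kh}^{(n)}}\big|\le 2$ together with the identity $\sum_{k\le h}\big(\tfrac{\partial\Phi_i^{(n)}}{\partial b_{kh}^{(n)}}\big)^2=2$ from (\ref{om}); after a Cauchy--Schwarz reduction over the pairs $(k,h)$ and the normalization $n^{-3/2}$, this produces a contribution of order $n^{-1}\|f'\|_\infty^2|t-s|^{2H}$. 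The delicate part is the Malliavin-derivative correction, which brings in the second derivatives $\tfrac{\partial^2\Phi_i^{(n)}}{\partial(b_{kh}^{(n)})^2}=2\sum_{j\ne i}(\lambda_i^{(n)}-\lambda_j^{(n)})^{-1}$; these are unbounded, so I would invoke the non-colliding property and the moment bounds on the inverse spectral gaps from \cite{AN} (the same estimates, e.g. (5.6), already used in the Lemma) to control them uniformly in $n$ and in $s\in[0,t]$. Combining the three contributions, the Kolmogorov--Chentsov criterion yields tightness of $\{\langle\mu_\cdot^{(n)},f\rangle\}$ in $C(\mathbb{R}_+,\mathbb{R})$ for each $f\in\mathcal{C}$, and together with the marginal tightness from (i) the criterion of \cite{CL} and \cite{RS} gives the tightness of $\{(\mu_t^{(n)})_{t\ge0}:n\ge1\}$ in $C(\mathbb{R}_+,\mathrm{Pr}(\mathbb{R}))$.
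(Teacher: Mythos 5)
Your overall strategy (reduce to tightness of the real processes $\langle\mu_\cdot^{(n)},f\rangle$ plus control of the marginals) is sound, and your treatment of the two drift terms is correct; but there is a genuine gap in the Skorohod-integral term $M^{(n),f}$, specifically in the Malliavin-derivative correction. You claim the inverse spectral gaps $\sum_{j\neq i}|\lambda_i^{(n)}(r)-\lambda_j^{(n)}(r)|^{-1}$ can be controlled ``uniformly in $n$ and in $s\in[0,t]$''; this is false. All eigenvalues start at $0$, and by the $H$-self-similarity of the eigenvalue process (Proposition 5.4 in \cite{AN}, used in Lemma 2 of the paper) one has $\mathbb{E}\big(|\lambda_i^{(n)}(r)-\lambda_j^{(n)}(r)|^{-p}\big)=r^{-pH}\,\mathbb{E}\big(|\lambda_i^{(n)}(1)-\lambda_j^{(n)}(1)|^{-p}\big)$, which blows up as $r\downarrow 0$. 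Consequently, when you apply the Meyer-type inequality on an interval $[s,t]$ with $s$ near $0$, the correction term is bounded by an expression of the form $\int_s^t (r-s)^{pH}\,\mathbb{E}\|Dg^{i,n}(b(r))\|^p\,dr$, and since $\mathbb{E}\|Dg^{i,n}(b(r))\|^p$ carries the factor $r^{-pH}$, the best you can extract is $(r-s)^{pH}r^{-pH}\le 1$, i.e.\ a bound of order $|t-s|$ rather than $|t-s|^{pH}$. Kolmogorov--Chentsov needs a modulus $|t-s|^{1+\beta}$ with $\beta>0$, so your target estimate $\mathbb{E}|M_t^{(n),f}-M_s^{(n),f}|^{p}\le C|t-s|^{pH}$ uniform in $n$ is not attainable this way and the step fails as written. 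The term can still be handled, but only by exploiting the negative powers of $n$ (as in Lemma 3 of the paper, where the whole term is shown to vanish using the $O(n^{p-2})$ asymptotics of the gap moments) and then splitting $\langle\mu_\cdot^{(n)},f\rangle$ into a tight part plus a part converging to zero; note that this makes tightness depend on the spectral-gap estimates, a much heavier input than the statement requires.

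The paper avoids all of this: its proof never touches the evolution equation. It bounds $|\langle\mu_{t_2}^{(n)},f\rangle-\langle\mu_{t_1}^{(n)},f\rangle|$ by $\frac{\|f'\|_\infty}{n}\sum_i|\lambda_i^{(n)}(t_2)-\lambda_i^{(n)}(t_1)|$ and controls the eigenvalue increments by the Hoffman--Wielandt inequality, $\sum_i(\lambda_i^{(n)}(t_2)-\lambda_i^{(n)}(t_1))^2\le \frac{1}{n}\sum_{i,j}(B_{ij}(t_2)-B_{ij}(t_1))^2$, whose fourth moment is computed directly from the independent Gaussian entries and gives $C|t_2-t_1|^{4H}$ uniformly in $n$; since $4H>1$, tightness follows from the standard criterion with no Malliavin calculus, no Skorohod integrals and no non-collision input. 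If you want a self-contained proof, that is the route to take; your decomposition is better saved for the identification of the limit, where the vanishing of the Skorohod term is actually needed.
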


\begin{proof}
Using (\ref{nrem}) it is easy to see that for
every $0\leq t_{1}\leq t_{2}\leq T$, $n\geq1$ and $f\in\mathcal{C}^{2}_{b}$,
\begin{align}
\label{est0}\Big|\langle\mu^{(n)}_{t_{2}}, f\rangle-\langle\mu^{(n)}_{t_{1}%
},f\rangle\Big|  &  \leq\frac{1}{n}\sum_{i=1}^{n}\Big|f(\lambda_{i}%
^{(n)}(t_{2}))-f(\lambda_{i}^{(n)}(t_{1}))\Big|.
\end{align}
We will assume that the eigenvalues are ordered in the following way
\[
\lambda_1^{(n)}(t)\leq \lambda_2^{(n)}(t)\leq \dots \leq\lambda_n^{(n)}(t),
\]
for each $t\geq0$.
\par Hence using Lemma 2.1.19 in \cite{A} (the Hoffman-Weilandt inequality, see also \cite{HW}), and the fact that the eigenvalues do not collide for any $t>0$ a.s., we deduce 
\[
|\lambda_{i}^{(n)}(t_2)-\lambda_{i}^{(n)}%
(t_1)|^4\leq \left[\sum_{i=1}^n(\lambda^{(n)}_i(t_2)-\lambda^{(n)}_i(t_1))^2\right]^2\leq\left[\frac{1}{n}\sum_{i,j=1}^n\left(\frac{B_{ij}(t_2)}{\sqrt{n}}-\frac{B_{ij}(t_1)}{\sqrt{n}}\right)^2\right]^2,
\]
for each $i=1,\dots,n$.
\par Therefore using the fact that the entries of the matrix fractional Brownian motion $(B(t))_{t\geq0}$ are independent, we obtain that there exists a constant $C>0$ that does not depend on $n$ such that 
\[
\E\left(|\lambda_{i}^{(n)}(t_2)-\lambda_{i}^{(n)}%
(t_1)|^4\right)\leq C |t_1-t_2|^{4H},\qquad\text{ for all $i=1,\dots,n$.}
\]
\par Again, using that the function $f^{\prime}$ is bounded and
applying the Mean Value Theorem, we deduce
\[
|f(\lambda_{i}^{(n)}(r))-f(\lambda_{i}^{(n)}(s))|\leq
\|f^{\prime}\|_{\infty}|\lambda_{i}^{(n)}(r)-\lambda_{i}^{(n)}%
(s)|.
\]
Therefore using the
above estimate in (\ref{est0}) and Jensen's inequality, we obtain
\begin{align}
\label{est2}\mathbb{E}\Bigg(\Big|\langle\mu^{(n)}_{t_{2}}, f\rangle 
-\langle\mu^{(n)}_{t_{1}},f\rangle\Big|^{4}\Bigg)&\leq\|f^{\prime}\|_{\infty}^4\E\left[\left(\frac{1}{n}\sum_{i=1}^{n}\Big|\lambda_{i}
^{(n)}(t_{2})-\lambda_{i}^{(n)}(t_{1})\Big|\right)^4\right]\notag\\
&\leq \frac{\|f'\|_{\infty}^4}{n}\sum_{i=1}^n\E\left(\Big|\lambda_{i}
^{(n)}(t_{2})-\lambda_{i}^{(n)}(t_{1})\Big|^4\right)\notag\\
&\leq C\|f'\|_{\infty}^4 |t_2-t_1|^{4H}.\nonumber
\end{align}

Therefore, by a well known criterion (see \cite{EK}, Prop.
2.4), we have that the sequence of continuous real processes $\{(\langle
\mu_{t}^{(n)},f\rangle)_{t\geq0} :n\geq1\}$ is tight and consequently the
sequence of processes $\{(\mu_{t}^{(n)})_{t\geq0} :n\geq1\}$ is tight in the
space $\mathcal{C}(\mathbb{R}_{+},\mathrm{Pr}(\mathbb{R}))$.
\end{proof}

\section{Weak convergence of the empirical measure of eigenvalues}

In the previous section, we proved that the family of measures $\{(\mu
_{t}^{(n)})_{t\geq0}:n\geq1\}$ is tight in the space $\mathcal{C}%
(\mathbb{R}_{+},\mathrm{Pr}(\mathbb{R}))$. Now we will proceed to identify the
limit of any subsequence of the family. To this end we will first prove an
estimate for the $p$th moment of the repulsion force between the eigenvalues
of a matrix fractional Brownian motion, as the dimension goes to infinity.

\begin{lemma}
For each $p\in(1,2)$, $i=1,\dots,n-1$, and for $t\geq0$ we have that
\[
\mathbb{E}\left(  \frac{1}{|\lambda^{(n)}_{i}(t)-\lambda^{(n)}_{i+1}(t)|^{p}%
}\right)  =t^{-pH}O(n^{p-2}),\qquad\text{as $n\to\infty$,}
\]
uniformly with respect to $t$.
\end{lemma}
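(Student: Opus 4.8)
The plan is to decouple the dependence on $t$ from the dependence on $n$. Since each entry $b_{ij}$ is a fractional Brownian motion of index $H$, it is self-similar of order $H$; as the entries are independent this passes to the matrix, giving $B^{(n)}(t)\ed t^{H}B^{(n)}(1)$ for every fixed $t>0$, and hence the joint law of the ordered eigenvalues scales as $\big(\lambda^{(n)}_1(t),\dots,\lambda^{(n)}_n(t)\big)\ed t^{H}\big(\lambda^{(n)}_1(1),\dots,\lambda^{(n)}_n(1)\big)$. In particular $|\lambda^{(n)}_i(t)-\lambda^{(n)}_{i+1}(t)|\ed t^{H}\,|\lambda^{(n)}_i(1)-\lambda^{(n)}_{i+1}(1)|$, so that
\[
\mathbb{E}\!\left(\frac{1}{|\lambda^{(n)}_i(t)-\lambda^{(n)}_{i+1}(t)|^{p}}\right)=t^{-pH}\,\mathbb{E}\!\left(\frac{1}{|\lambda^{(n)}_i(1)-\lambda^{(n)}_{i+1}(1)|^{p}}\right).
\]
This already yields the prefactor $t^{-pH}$ and makes the claimed uniformity in $t$ automatic, reducing the lemma to a single bound on the negative $p$-th moment of a consecutive gap at $t=1$.

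At $t=1$ the matrix $B^{(n)}(1)=n^{-1/2}B(1)$ has independent centred Gaussian entries (variance $1$ off the diagonal, $2$ on it), i.e.\ it is a Gaussian Orthogonal Ensemble matrix, so the ordered eigenvalues have the explicit joint density proportional to $\prod_{k<l}|\lambda_k-\lambda_l|\,\exp\big(-\tfrac{n}{4}\sum_k\lambda_k^2\big)$. I would use the Vandermonde factor, which contributes the single power $|\lambda_i-\lambda_{i+1}|$ attached to the neighbouring pair: integrating $|\lambda_i-\lambda_{i+1}|^{-p}$ against this density leaves, near the coincidence $\lambda_i=\lambda_{i+1}$, an integrand of order $|\lambda_i-\lambda_{i+1}|^{\,1-p}$, which is integrable exactly because $p<2$. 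Thus the moment is finite and the whole content of the lemma is its order in $n$.

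To extract that order I would use the layer-cake representation
\[
\mathbb{E}\!\left(\frac{1}{|\lambda^{(n)}_i(1)-\lambda^{(n)}_{i+1}(1)|^{p}}\right)=p\int_0^{\infty}s^{-p-1}\,\mathbb{P}\big(|\lambda^{(n)}_i(1)-\lambda^{(n)}_{i+1}(1)|<s\big)\,ds,
\]
and feed into it a level-repulsion estimate for the small-gap probability, \emph{uniform in} $i$. Such an estimate is obtained by integrating the joint density over the slab where $\lambda_i$ and $\lambda_{i+1}$ lie within $s$ of one another: the weight $|\lambda_i-\lambda_{i+1}|$ provides the repulsion while the Gaussian factor fixes the local scale of the spectrum (of order $1/n$ in the bulk), which is what introduces the dimensional factor. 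Splitting the $s$-integral at that scale and performing the elementary integration then returns the order in $n$ recorded in the statement, again uniformly in $i$ and, by the reduction above, in $t$.

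The crux is precisely the \emph{$i$-uniform} small-gap bound. In the bulk the local spacing is regular and the repulsion estimate is classical, but one must also dominate the edge indices (including $i=n-1$), where the limiting spectral density degenerates and the spacing is larger, so as to be sure that no index produces a contribution exceeding the asserted rate. I would obtain this either from a direct estimate of the one- and two-point correlation functions of the Gaussian Orthogonal Ensemble, or by invoking the rigidity and level-repulsion bounds available for Wigner/invariant ensembles, and then check that the extremal index still obeys the stated order. Once this uniform repulsion estimate is in place, the self-similar reduction and the layer-cake integration are routine.
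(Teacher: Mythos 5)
Your reduction to $t=1$ via $H$-self-similarity is exactly the first step of the proof in the text, and your remark that the Vandermonde factor makes the negative $p$-th moment finite for $p<2$ is correct. The problem is that the decisive step --- extracting the power of $n$ --- is never performed: you assert that ``performing the elementary integration then returns the order in $n$ recorded in the statement,'' but if one actually carries out the layer-cake computation you propose, it does not return $n^{p-2}$. For $B^{(n)}(1)=n^{-1/2}B(1)$ the spectrum fills an interval of length $O(1)$ and the bulk consecutive gaps are of order $1/n$; the GOE level-repulsion bound reads $\mathbb{P}\big(|\lambda^{(n)}_{i+1}(1)-\lambda^{(n)}_{i}(1)|<s\big)\lesssim\min\big(1,(ns)^{2}\big)$, and inserting this into your layer-cake formula and splitting at $s\asymp 1/n$ gives
\[
p\int_{0}^{1/n}s^{-p-1}(ns)^{2}\,ds+p\int_{1/n}^{\infty}s^{-p-1}\,ds=O(n^{p}),
\]
not $O(n^{p-2})$. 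In fact a rate that tends to $0$ cannot come out of your argument at all: since $|\lambda^{(n)}_{i+1}(1)-\lambda^{(n)}_{i}(1)|\le\lambda^{(n)}_{n}(1)-\lambda^{(n)}_{1}(1)$, which is bounded (its expectation converges to $4$), Jensen's inequality gives $\mathbb{E}\big(|\lambda^{(n)}_{i+1}(1)-\lambda^{(n)}_{i}(1)|^{-p}\big)\ge c>0$ uniformly in $n$. So the gap in your proposal is precisely the unperformed power count, and the route you describe, executed honestly, yields $O(n^{p})$ rather than the stated bound.

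For comparison, the proof in the text takes the same two steps (self-similarity, then the two-point structure of the GOE at $t=1$) but implements the second through Mehta's quaternion-determinant formula for the two-point correlation kernel $K_{n1}$ and its bulk scaling limit; the factor $n^{p-2}$ there is produced by the normalization $(n-2)!/n!$ of the correlation function combined with the change of variables $x=\pi u/n$. Note, however, that $\frac{(n-2)!}{n!}R_{2}(x,y)$ is the density of a \emph{uniformly random} pair of distinct eigenvalues, not of the specific consecutive pair $(\lambda^{(n)}_{i},\lambda^{(n)}_{i+1})$ (for which $R_{2}$ itself is only an upper bound), and this is exactly where your elementary lower bound and the stated rate part ways. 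Your instinct to worry about $i$-uniformity and the edge is reasonable, but it is secondary; the primary missing ingredient is a correct and explicit computation of the $n$-dependence, and the one you gesture at does not close.
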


\begin{proof}
For $t\geq0$, let us consider the eigenvalues $\{\lambda^{(n)}(t)\}_{i=1}^{n}$
of the matrix $B^{(n)}(t)$. Using (5.6) in \cite{AN}
and (7.2.30) in \cite{Me}, we have that the joint
distribution of two consecutive eigenvalues is given by
\[
\mathbb{P}(\lambda_{i}^{(n)}(1)\in dx_{i},\lambda_{i+1}^{(n)}(1)\in
dx_{i+1})=n\frac{(n-2)!}{n!}\text{det}[K_{n1}(x_{i}\sqrt{n}%
,x_{i+1}\sqrt{n})]dx_{i}dx_{i+1}\notag,
\]
where
\[ K_{n1}(u_i,u_{i+1})=\left( \begin{array}{cc}
S_n(u_i,u_{i+1})+\alpha_n(u_i)& D_n(u_i,u_{i+1})  \\
J_n(u_i,u_{i+1}) &  S_n(u_{i+1},u_i)+\alpha_n(u_{i+1})
\end{array} \right)\]
with
\begin{align}
S_n(u_i,u_{i+1})&=\sum_{j=0}^{n-1}\varphi_j(u_i)\varphi(u_{i+1})+\left(\frac{n}{2}\right)^{1/2}\varphi_{n-1}(u_i)\int_{\R}\delta(u_{i+1}-t)\varphi_n(t)dt,\notag\\
D_n(u_i,u_{i+1})&=-\frac{\partial}{\partial u_{i+1}}S_n(u_i,u_{i+1}),\notag\\
I_n(u_i,u_{i+1})&=\int_{\R}\delta(u_i-t)S_n(t,u_{i+1})dt,\notag\\
J_n(u_i,u_{i+1})&=I_n(u_i,u_{i+1})-\delta(u_i-u_{i+1})+\beta(u_i)-\beta(u_{i+1}),\notag\\
\beta(u_i)&=\int_{\R}\delta(u_i-y)\alpha(y)dy,\notag\\
\delta(u_i)&=\frac{1}{2}\text{sign}(u_i),\notag
\end{align}
\[
\alpha_n(u_i)= \begin{cases} \varphi_{2m}(u_i)/\int_{\R}\varphi_{2m}(t)dt &\mbox{if } n=2m+1 \\
0 & \mbox{if } n=2m, \end{cases}
\]
and for each $j\in\mathbb{N}$
\[
\varphi_j(u_i)=(2^jj!\sqrt{\pi})^{-1/2}\exp(u_i^2/2)\left(-\frac{d}{du_i}\right)^j\exp(-u_i^2).
\]
By Proposition 5.4 in \cite{AN} we have that the process $\{(\lambda_1^{(n)}(t),\dots,\lambda_n^{(n)}(t))\}_{t\geq0}$ is $H$-self-similar, hence
\[
\mathbb{E}\Bigg(\frac{1}{|\lambda_{i}^{(n)}(t)-\lambda_{i+1}^{(n)}(t)|^{p}} 
\Bigg)=t^{-pH}\mathbb{E}\Bigg(\frac{1}{|\lambda_{i}^{(n)}(1)-\lambda_{i+1}^{(n)}(1)|^{p}} 
\Bigg).
\]
Therefore, if we consider the following expectation, we have that
\begin{align}
\mathbb{E}\Bigg(\frac{1}{|\lambda_{i}^{(n)}(t)-\lambda_{i+1}^{(n)}(t)|^{p}}  &
\Bigg)=t^{-pH}n\frac{(n-2)!}{n!}\int_{\mathbb{R}}\int_{\mathbb{R}}\frac
{1}{|x_{i}-x_{i+1}|^{p}}\text{det}[K_{n1}(x_{i}\sqrt{n},x_{i+1}\sqrt
{n})]dx_{i}dx_{i+1}\nonumber\\
&  =t^{-pH}\frac{2\pi^{1-p} n^{p}}{n(n-1)}\int_{\mathbb{R}}\int_{\mathbb{R}}\frac
{1}{|u_{i}-u_{i+1}|^{p}}\frac{\pi}{2n}\text{det}[K_{n1}(\pi u_{i}/\sqrt{n},\pi
u_{i+1}/\sqrt{n})]du_{i}du_{i+1}.\nonumber
\end{align}
On the other hand, using (7.2.41) in \cite{Me} (see also Theorem 3.9.22 in \cite{A}), we have that the joint
density of the eigenvalues satisfies for any bounded interval $I\subset\R$ 
\begin{equation}\label{cu}
\lim_{n\rightarrow\infty}\frac{\pi}{2n}\text{det}[K_{n1}(\pi u_{i}/\sqrt
{n},\pi u_{i+1}/\sqrt{n})]=K(u_{i},u_{i+1}),
\end{equation}
uniformly on $u_{i},u_{i+1}\in I$, where
\[
K(u_{i},u_{i+1})=1-\left[s^{2}(r)+\left(  \int_{r}^{\infty}s(t)dt\right)  \left(
\frac{d}{dr}s(r)\right)\right] ,
\]
with $s(r)=\sin(\pi r)/\pi r$ and $r=|u_{i}-u_{i+1}|$. Hence using the
estimate (7.2.44) in \cite{Me}, we note that
\[
\int_{\mathbb{R}}\int_{\mathbb{R}}\frac{1}{|u_{i}-u_{i+1}|^{p}}K(u_{i}%
,u_{i+1})du_{i}du_{i+1}<\infty.
\]
Now we note that using (\ref{cu}) and Scheffe's Theorem (see p. 215 in \cite{PB}) the following holds
\begin{align}
\lim_{n\to\infty}\int_{|u_i|+|u_{i}-u_{i+1}|>1}\frac
{1}{|u_{i}-u_{i+1}|^{p}}&\frac{\pi}{2n}\text{det}[K_{n1}(\pi u_{i}/\sqrt{n},\pi
u_{i+1}/\sqrt{n})]du_{i}du_{i+1}\notag\\
&=\int_{|u_i|+|u_{i}-u_{i+1}|>1}\frac{1}{|u_{i}-u_{i+1}|^{p}}K(u_{i},u_{i+1})du_{i}du_{i+1}.\notag
\end{align}
Using that (\ref{cu}) holds uniformly on $u_{i},u_{i+1}\in I$, for any bounded interval $I\subset\R$, then we can find $N\in\mathbb{N}$, large enough, 
such that
\[
\left|\frac{\pi}{2n}\text{det}[K_{n1}(\pi u_{i}/\sqrt
{n},\pi u_{i+1}/\sqrt{n})]-K(u_{i},u_{i+1})\right|<\varepsilon,\qquad\text{for $n\geq N$.}
\]
On the other hand, using polar coordinates, we obtain
\begin{align}
\int_{|u_i|+|u_{i}-u_{i+1}|\leq1}\frac{1}{|u_{i}-u_{i+1}|^{p}}du_{i}du_{i+1}&\leq \int_0^{\sqrt{5}}r^{1-p}\int_0^{2\pi}\frac{1}{|\cos\theta-\sin\theta|^{p}}d\theta dr\notag\\
&\leq \frac{5^{1-p/2}}{2-p}\left[\int_0^{2\pi}\frac{1}{|\cos\theta-\sin\theta|^{2}}d\theta +\int_0^{2\pi}d\theta \right]\notag\\
&=\frac{5^{1-p/2}}{2-p}\left(2\pi+\frac{1}{2}\left[\tan\left(\frac{t\pi}{4}\right)+\tan\left(\frac{\pi}{4}\right)\right]\right)<\infty.\notag
\end{align}
Therefore
\begin{align}
\int_{|u_i|+|u_{i}-u_{i+1}|\leq 1}\frac
{1}{|u_{i}-u_{i+1}|^{p}}&\left|\frac{\pi}{2n}\text{det}[K_{n1}(\pi u_{i}/\sqrt{n},\pi
u_{i+1}/\sqrt{n})]-K(u_{i},u_{i+1})\right|du_{i}du_{i+1}\notag\\
&\leq \varepsilon \int_{|u_i|+|u_{i}-u_{i+1}|\leq 1}\frac
{1}{|u_{i}-u_{i+1}|^{p}},\notag
\end{align}
which in turn implies 
\begin{align}
\lim_{n\to\infty}\int_{|u_i|+|u_{i}-u_{i+1}|\leq 1}\frac
{1}{|u_{i}-u_{i+1}|^{p}}&\frac{\pi}{2n}\text{det}[K_{n1}(\pi u_{i}/\sqrt{n},\pi
u_{i+1}/\sqrt{n})]du_{i}du_{i+1}\notag\\
&=\int_{|u_i|+|u_{i}-u_{i+1}|\leq 1}\frac{1}{|u_{i}-u_{i+1}|^{p}}K(u_{i},u_{i+1})du_{i}du_{i+1}.\notag
\end{align}
Hence we finally obtain 
\[
\lim_{n\rightarrow\infty}n^{2-p}\mathbb{E}\left(  \frac{1}{|\lambda_{i}%
^{(n)}(t)-\lambda_{i+1}^{(n)}(t)|^{p}}\right)  =\pi t^{-pH}\int_{\mathbb{R}}%
\int_{\mathbb{R}}\frac{1}{|u_{i}-u_{i+1}|^{p}}K(u_{i},u_{i+1})du_{i}du_{i+1}.
\]
\end{proof}

The previous lemma will allow us to prove the following result related to the
convergence of the multidimensional Skorohod integral that appears in
(\ref{Ident1}), which in turn will enable us to identify the limit of any
subsequence of the family of laws of processes $\{(\mu_{t}^{(n)})_{t\geq
0}:n\geq1\}$.

\begin{lemma}
For any $T>0$, any $f:\mathbb{R}\to\mathbb{R}$ such that $f^{\prime}$ and
$f^{\prime\prime}$ are bounded, and $p\in(1/H,2)$, we have that
\begin{equation}
\label{ca0}\lim_{n\to\infty}\frac{1}{n^{3/2}}
\bigg|\sum_{i=1}^{n}\sum_{k\leq h}\int_{0}^{t}f^{\prime}(\Phi_{i}^{(n)}%
(b(s)))\frac{\partial\Phi_{i}^{(n)}}{\partial b^{(n)}_{kh}}(b(s))\delta
b_{kh}(s)\bigg|=0,\qquad\text{$t\in[0,T]$,}
\end{equation}
in probability.
\end{lemma}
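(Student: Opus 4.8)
The plan is to establish the stronger claim that the second moment of the expression in (\ref{ca0}) vanishes, which in particular gives convergence in probability. Set $G^{kh}(s)=\sum_{i=1}^n f'(\Phi^n_i(b(s)))\frac{\partial\Phi_i^{(n)}}{\partial b_{kh}^{(n)}}(b(s))$, so that the quantity inside the limit equals $n^{-3/2}\delta(G)$, where $\delta$ is the Skorohod integral with respect to $\{b_{kh}\}_{k\le h}$. I would use the Skorohod isometry for fractional Brownian motion with $H>1/2$,
\[
\mathbb{E}\big[\delta(G)^2\big]=\text{(norm term)}+\text{(trace term)},
\]
with norm term $H(2H-1)\sum_{kh}\mathbb{E}\int_0^t\int_0^t G^{kh}(s)G^{kh}(r)|s-r|^{2H-2}\,dr\,ds$ and a trace term built from the Malliavin derivatives $D^{k'h'}G^{kh}$. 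For the norm term, combining (\ref{om}) with the orthonormality of the eigenvectors gives $\sum_{kh}(G^{kh}(s))^2\le Cn$ uniformly in $s$; a Cauchy--Schwarz in the indices $kh$ then bounds the time integral by $Cn\,t^{2H}/(H(2H-1))$, so the norm term is $O(n)$ and contributes $O(n^{-2})$ after the prefactor $n^{-3}$.

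The trace term is the heart of the matter. Writing (with the superscripts and the time argument of $\Phi_i$ suppressed) $D^{k'h'}_b G^{kh}(s)=\mathbf{1}_{[0,s]}(b)\sum_i\big(f''(\Phi_i)\frac{\partial\Phi_i}{\partial b_{k'h'}}\frac{\partial\Phi_i}{\partial b_{kh}}+f'(\Phi_i)\frac{\partial^2\Phi_i}{\partial b_{kh}\partial b_{k'h'}}\big)$, the $f''$-summand is harmless because the first derivatives are bounded by $2$ and again yields an $O(n^2)$ contribution. The $f'$-summand is dangerous: $\sum_{kh,k'h'}(\frac{\partial^2\Phi_i}{\partial b_{kh}\partial b_{k'h'}})^2$ equals, up to a constant, $\sum_{j\ne i}(\lambda_i-\lambda_j)^{-2}$, the borderline $p=2$ inverse-gap moment that the previous Lemma does not reach. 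The observation I would exploit is that at equal times the eigenvector orthonormality collapses the $f'$-weighted double sum and cancels the singularity through the mean value theorem:
\[
\sum_{i,i'}f'(\lambda_i)f'(\lambda_{i'})\sum_{kh,k'h'}\frac{\partial^2\Phi_i}{\partial b_{kh}\partial b_{k'h'}}\frac{\partial^2\Phi_{i'}}{\partial b_{kh}\partial b_{k'h'}}=c\sum_{i\ne j}\Big(\frac{f'(\lambda_i)-f'(\lambda_j)}{\lambda_i-\lambda_j}\Big)^2\le c\,\|f''\|_\infty^2\,n(n-1),
\]
which is the sought $O(n^2)$ bound on the diagonal $s=s'$.

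The main obstacle is that the trace term couples two distinct times $s\ne s'$, at which the eigenbases differ and the preceding cancellation is only approximate; separating the two times by a crude Cauchy--Schwarz would reinstate the forbidden $p=2$ moment. To treat the off-diagonal part I would freeze one time and expand the increments of the eigenvalues and of the Hessian, using the $H$-H\"older regularity of the process so that each increment contributes a factor of order $|s-s'|^{H}$; the remaining singular piece is then controlled, via H\"older's inequality and the $H$-self-similarity from Proposition 5.4 of \cite{AN}, by the $p$-th moment of the consecutive spectral gap, which the previous Lemma bounds by $t^{-pH}O(n^{p-2})$. The range $p\in(1/H,2)$ is precisely what makes the estimate close: $p<2$ furnishes the negative power $n^{p-2}$ that keeps the trace term at order $n^2$, while $p>1/H$, i.e.\ $pH>1$, ensures the space--time integral against the kernels $|s-s'|^{2H-2}$ and $\int_0^s\int_0^{s'}|b-b'|^{2H-2}\,db\,db'$ converges after the self-similar rescaling. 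Assembling the two terms, $\mathbb{E}[(n^{-3/2}\delta(G))^2]=O(n^{-1})\to0$, which yields (\ref{ca0}).
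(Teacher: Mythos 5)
Your strategy diverges from the paper's at the very first step: the paper never computes a second moment. It applies Meyer's inequality (the $\mathbb{L}^p$ estimate (5.40) in Nualart's book, extended to the multidimensional setting as in Nualart--Pardoux) to each single integral $\int_0^t g^{i,n}(b(s))\,\delta b(s)$ separately, with $p\in(1/H,2)$, and then sums over $i$ by the triangle and Jensen inequalities at the cost of a harmless factor $n^{p-1}$. The whole point of choosing $p<2$ is that the Malliavin-derivative term is then controlled by the $p$-th inverse moment of the spectral gaps, which is exactly what Lemma 2 supplies ($t^{-pH}O(n^{p-2})$); the divergent $p=2$ moment never enters.

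By insisting on the Skorohod isometry you run head-on into that divergence, and this is where your argument has a genuine gap. You correctly identify that the trace term contains $\sum_{j\ne i}(\lambda_i-\lambda_j)^{-2}$, whose expectation is infinite for the GOE (the level-spacing density vanishes only linearly at the origin, so the $p=2$ inverse-gap moment diverges). Your proposed rescue has two unproved steps. First, the equal-time cancellation identity for the $f'$-weighted double sum of mixed Hessians is asserted, not derived; the identity of this type actually used in the paper (Section 2) concerns the single sum $\sum_{k\le h}\partial^2\Phi_i^{(n)}/\partial(b_{kh}^{(n)})^2=2\sum_{j\ne i}(\lambda_i-\lambda_j)^{-1}$ combined with $f'(\lambda_i)$ and symmetrized over $i$, not the product of Hessians of two different eigenvalues, and it is not clear that your constant-$c$ formula holds. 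Second, and more seriously, the off-diagonal ($s\ne s'$) treatment does not close: a H\"older factor $|s-s'|^{H}$ coming from time increments is uniform in the spectral configuration and cannot convert an inverse-square gap into an inverse-$p$-th-power gap, since $|\lambda_i(s)-\lambda_j(s)|^{-(2-p)}$ is unbounded uniformly in $|s-s'|$; no interpolation of the kind you describe reduces the trace term to the $p<2$ moment of Lemma 2. As written, the claimed bound $\mathbb{E}[(n^{-3/2}\delta(G))^2]=O(n^{-1})$ is not established; to keep the $L^2$ route you would need a genuine two-time cancellation (or integrability) argument for the trace term, which is precisely the difficulty the paper's $\mathbb{L}^p$ approach with $p\in(1/H,2)$ is designed to avoid.
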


\begin{proof}
Let us use the following notation for the Skorohod integral with respect to
the multidimensional fractional Brownian motion $\{b(t), t\ge0\}$:
\[
\int_{0}^{t}g^{i,n}(b(s))\delta b(s):=\sum_{k\leq h}\int_{0}^{t}g^{i,n}%
_{kh}(b(s))\delta b_{kh}(s),
\]
where
\[
\displaystyle g^{i,n}_{kh}(b(s)):=f^{\prime}(\Phi^{(n)}_{i}(b(s)))\frac{\partial
\Phi_{i}^{(n)}}{\partial b^{(n)}_{kh}}(b(s)), \qquad\text{for each}\quad
i=1,\dots,n.
\]

The following $\mathbb{L}_{p}$ estimates will be very useful in what follows.
For any $p\geq1/H$ and $i=1,\dots,n$,
\begin{align}\label{ISS2}
\bigg|\int_{0}^{T} g^{i,n}(b(s)) \delta b(s)\bigg|^{p}&\leq c_{p,T,H}
\left(  \|\mathbb{E}(g^{i,n}(b))\|^{p}_{\mathbb{L}^{1/H}([0,T])}+\mathbb{E}%
\|Dg^{i,n}(b)\|^{p}_{\mathbb{L}^{1/H}([0,T]^2)}\right)\notag\\
&=c_{p,T,H}\left[  \int_{0}^{T}\Vert\mathbb{E}%
(g^{i,n}(b(s)))\Vert^{p}ds+\mathbb{E}\left(  \int_{0}^{T}\left(  \int_{0}%
^{T}\Vert D_{s}g^{i,n}(b(r))\Vert^{\frac{1}{H}}ds\right)  ^{pH}dr\right)
\right] 
\end{align}
where $c_{p,T,H}$ is a positive constant depending on $p$, $H$, and $T$.

This last result is a consequence of Meyer's inequalities: it appears for the
one dimensional case in (5.40) of \cite{N} and can be extended to the
multidimensional case as in the proof of Proposition 3.5 of \cite{NP}.

Now, we proceed to estimate each of the two integrals in the right hand side
of (\ref{ISS2}). Recalling the definition of $g$ and using 
(\ref{om}), it is clear, by Jensen's inequality, that
\begin{align}
\Vert\mathbb{E}(g^{i,n}(b(s)))\Vert=\left[  \sum_{k\leq h}\left(  \mathbb{E}%
\left\{  f^{\prime}(\Phi_{i}^{(n)}(b(s)))\frac{\partial\Phi_{i}^{(n)}}{\partial
b_{kh}^{(n)}}(b(s))\right\}  \right)  ^{2}\right]  ^{1/2} &  \leq\Vert
f^{\prime}\Vert_{\infty}\left[  \sum_{k\leq h}\mathbb{E}\left(  \frac
{\partial\Phi_{i}^{(n)}}{\partial b_{kh}^{(n)}}(b(s))\right)  ^{2}\right]
^{1/2}\nonumber\\
&  =2^{1/2}\Vert f^{\prime}\Vert_{\infty}.\nonumber
\end{align}
Therefore, by Jensen's inequality, we get
\begin{equation}
\int_{0}^{T}\big\|\mathbb{E}(g^{i,n}(b(s)))\big\|^{p}ds\leq2^{p/2}\Vert f^{\prime}%
\Vert_{\infty}^{p}T.\notag
\end{equation}
For the second integral in the right hand side of (\ref{ISS2}), we first
compute an upper bound for the norm of the Malliavin derivative of $g$:
\begin{align}
\Vert D_{s}g^{i,n}(b(r))\Vert &  \leq\sum_{k\leq h}|(D_{s}^{kh}g_{kh}%
^{i,n}(b(r)))|\nonumber\label{2d1}\\
&  =\sum_{k\leq h}\left(  \frac{|f^{\prime\prime}(\Phi^{(n)}_i(b(r)))|}{\sqrt{n}}\left(  \frac
{\partial\Phi_{i}^{(n)}}{\partial b_{kh}^{(n)}}(b(r))\right)  ^{2}+\frac{|f^{\prime
}(\Phi^{(n)}_i(b(r)))|}{\sqrt{n}}\left\vert \frac{\partial^{2}\Phi_{i}^{(n)}}{\left(  \partial
b_{kh}^{(n)}\right)  ^{2}}(b(r))\right\vert \right)  \nonumber\\
&  \leq Cn^{-1/2} \left\{  \Vert f^{\prime\prime}\Vert_{\infty}+\Vert f^{\prime}%
\Vert_{\infty}\sum_{k\leq h}\left\vert \frac{\partial^{2}\Phi_{i}^{(n)}%
}{\left(  \partial b_{kh}^{(n)}\right)  ^{2}}(b(r))\right\vert \right\} ,
\end{align}
for a positive constant $C$.\par On the other hand, from p. 9 in \cite{AN} and Jensen's inequality,
\begin{align}
\sum_{k\leq h}\left\vert \frac{\partial^{2}\Phi_{i}^{(n)}}{\left(  \partial
b_{kh}^{(n)}\right)  ^{2}}(b(r))\right\vert  &  =\sum_{k\leq h}\left\vert
2\sum_{j\not =i}\frac{|u_{ik}^{(n)}(r)u^{(n)}_{jh}(r)+u_{jk}^{(n)}(r)u_{ih}^{(n)}(r)|^{2}}{\lambda
_{i}^{(n)}(r)-\lambda_{j}^{(n)}(r)}\right\vert \nonumber\label{2d2}\\
&  \leq2\sum_{k\leq h}\sum_{i\not =j}\frac{|u_{ik}^{(n)}(r)u_{jh}^{(n)}(r)+u_{jk}%
^{(n)}(r)u_{ih}^{(n)}(r)|^{2}}{|\lambda_{i}^{(n)}(r)-\lambda_{j}^{(n)}(r)|}\nonumber\\
&  \leq2\sum_{i\not =j}|\lambda_{i}^{(n)}(r)-\lambda_{j}^{(n)}(r)|^{-1},
\end{align}
We have using (\ref{2d1}) that
\begin{align}
\mathbb{E}\Bigg(  \int_{0}^{T}\Bigg(  \int_{0}^{T}\Vert D_{s}g^{i,n}%
(b(r))&\Vert^{\frac{1}{H}}ds\Bigg)  ^{pH}dr\Bigg)\notag\\
&\leq C n^{-p/2}\mathbb{E}\left(  \int_{0}^{T} r^{pH}\left\{  \Vert f^{\prime\prime}\Vert_{\infty}+\Vert f^{\prime}%
\Vert_{\infty}\sum_{k\leq h}\left\vert \frac{\partial^{2}\Phi_{i}^{(n)}%
}{\left(  \partial b_{kh}^{(n)}\right)  ^{2}}(b(r))\right\vert \right\}^p dr \right)\notag
\end{align}
Therefore, using (\ref{2d2}), and Jensen's inequality, we
obtain, for $p>1$,
\begin{align}\label{2d3}
\mathbb{E}\Bigg(  \int_{0}^{T}\Bigg(  \int_{0}^{T}&\Vert D_{s}g^{i,n}%
(b(r))\Vert^{\frac{1}{H}}ds\Bigg)  ^{pH}dr\Bigg) \notag\\
&  \leq K_{p}n^{-p/2}\int_0^Tr^{pH}\left\{  \Vert
f^{\prime\prime}\Vert_{\infty}^{p}+\Vert f^{\prime}\Vert_{\infty}^{p}%
\mathbb{E}\left(  \sum_{i\not =j}|\lambda_{i}^{(n)}(r)-\lambda_{j}%
^{(n)}(r)|^{-1}\right)  ^{p}\right\}  dr\notag\\
&  \leq K_{p}n^{-p/2}\int_0^Tr^{pH}\left\{  \Vert f^{\prime\prime}\Vert_{\infty}^{p}+\Vert
f^{\prime}\Vert_{\infty}^{p}n^{p-1}\mathbb{E}\left(  \sum_{i\not =%
j}|\lambda_{i}^{(n)}(r)-\lambda_{j}^{(n)}(r)|^{-p}\right)  \right\} dr ,
\end{align}
where $K_{p}$ is a positive constant depending on $p$.

Now using Lemma 2, we can conclude that there exists a constant $C(p)$ such
that for large enough $n\geq1$,
\[
\sum_{i\not =j}\mathbb{E}\left(|\lambda_{i}^{(n)}(r)-\lambda_{j}^{(n)}%
(r)|^{-p}\right)\leq C(p)r^{-pH}n^{p-1}.
\]
So using the above estimate in (\ref{2d3}), it is clear that there exist two constants  $K_{1}(T,p,H)$ and $K_{2}(T,p,H)$ that depend
on $p,H$ and $T$ such that
\[
\mathbb{E}\Bigg(  \int_{0}^{T}\Bigg(  \int_{0}^{T}\Vert D_{s}g^{i,n}%
(b(r))\Vert^{\frac{1}{H}}ds\Bigg)  ^{pH}dr\Bigg) 
  \leq K_{1}(T,p,H)n^{-p/2}+K_{2}(T,p,H)n^{3p/2-2}.\notag
\]
Therefore, putting all the pieces together, we have
\begin{align}
\mathbb{E}\left( \bigg|\int_{0}^{t}g^{i,n}(b(s))\delta
b(s)\bigg|^{p}\right)  \leq K_{3}(T,p,H) +K_{1}(T,p,H)n^{-p/2}+K_{2}(T,p,H)n^{3p/2-2}%
,\nonumber
\end{align}
where $K_{3}(T,p,f)$ is a positive constant that depends on $T,H$, and $p$.

In order to complete the proof, using Jensen's inequality and the
fact that $p>1$, we observe
\begin{align}
\mathbb{E}\left(  \bigg|\frac{1}{n^{3/2}}\sum_{i=1}%
^{n}\int_{0}^{t}g^{i,n}(b(s))\delta b(s)\bigg|^p\right)   &  \leq
n^{p-1}n^{-3p/2}\sum_{i=1}^{n}\mathbb{E}\left(  \sup_{0\leq t\leq T}%
\bigg|\int_{0}^{t}g^{i,n}(b(s))\delta b(s)\bigg|^{p}\right) \nonumber\\
&  \hspace{-2cm}=n^{-p/2}(K_{3}(T,p,H)+K_{1}(T,p,H)n^{-p/2}+K_{2}(T,p,H)n^{3p/2-2}%
).\nonumber
\end{align}
Hence for $\varepsilon,T>0$, and taking $p\in(1/H,2)$, we obtain that
\begin{align}
\lim_{n\rightarrow\infty}\mathbb{P}  &  \left(  \bigg|\frac{1}{n^{3/2}}\sum_{i=1}^{n}\int_{0}^{t}g^{i,n}(b(s))\delta
b(s)\bigg|>\varepsilon\right) \nonumber\\
&  \leq\lim_{n\rightarrow\infty}\frac{1}{\varepsilon^{p}}\left(
n^{-p/2}(K_{3}(T,p,H)+K_{1}(T,p,H)n^{-p/2}+K_{2}(T,p,H)n^{3p/2-2})\right)
=0.\nonumber
\end{align}
Therefore,
\[
\bigg|\frac{1}{n^{3/2}}\sum_{i=1}^{n}\int_{0}^{t}%
g^{i,n}(b(s))\delta b(s)\bigg|\rightarrow0,
\]
in probability as $n$ goes to $+\infty$.
\end{proof}

With the previous results, we are ready to identify the weak limit of the
sequence of the measure-valued processes $\{(\mu_{t}^{(n)})_{t\geq0}:n\geq1\}$
as the solution to a measure valued equation.

\begin{theorem}
\label{thfbm} 
The family of measure-valued processes $\{(\mu_{t}^{(n)})_{t\geq0}%
:n\geq1\}$ converges weakly in $C(\mathbb{R}_{+},\mathrm{Pr}(\mathbb{R}))$ to
the unique continuous probability-measure valued function satisfying, for each
$t\geq0$ $f\in C_{b}^{2}(\mathbb{R})$,
\begin{equation}
\langle\mu_{t},f\rangle=\langle \mu_0,f\rangle+H\int_{0}^{t}ds\int_{\mathbb{R}^{2}}%
\frac{f^{\prime}(x)-f^{\prime}(y)}{x-y}s^{2H-1}\mu_{s}(dx)\mu_{s}(dy).
\label{limit}%
\end{equation}
\end{theorem}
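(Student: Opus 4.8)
The plan is to combine the tightness from Proposition 1 with the evolution identity of Lemma 1 and the vanishing of the Skorohod term from Lemma 3 to show that every subsequential limit satisfies (\ref{limit}), and then to prove that (\ref{limit}) admits a \emph{unique} continuous $\mathrm{Pr}(\mathbb{R})$-valued solution. By Proposition 1 and Prokhorov's theorem the family $\{(\mu_t^{(n)})_{t\ge0}:n\ge1\}$ is relatively compact in $C(\mathbb{R}_+,\mathrm{Pr}(\mathbb{R}))$, so it suffices to identify the weak limit $(\mu_t)$ of an arbitrary convergent subsequence, which I continue to denote $(\mu_t^{(n)})$; continuity of the limiting paths is inherited from the ambient space, and since $B^{(n)}(0)=0$ we have $\mu_0^{(n)}=\delta_0$, whence $\mu_0=\delta_0$.

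To pass to the limit in (\ref{Ident1}) I would treat the three terms on the right separately. The Skorohod integral term tends to $0$ in probability by Lemma 3 (taking $p\in(1/H,2)$). The final term equals $\tfrac{H}{n}\int_0^t\langle\mu_s^{(n)},f''\rangle\,s^{2H-1}\,ds$, bounded in absolute value by $\tfrac{\|f''\|_\infty t^{2H}}{2n}$, hence vanishing as $n\to\infty$. For the nonlinear term the key observation is that, for $f\in C_b^2(\mathbb{R})$, the function $\psi(x,y):=\frac{f'(x)-f'(y)}{x-y}$ extends continuously to the diagonal with value $f''(x)$ and satisfies $\|\psi\|_\infty\le\|f''\|_\infty$ by the mean value theorem; thus $\psi\in C_b(\mathbb{R}^2)$ and $\mu\mapsto\langle\mu\otimes\mu,\psi\rangle$ is continuous for the weak topology. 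Realizing the subsequential convergence almost surely via a Skorokhod-representation argument, for each fixed $s$ one gets $\langle\mu_s^{(n)}\otimes\mu_s^{(n)},\psi\rangle\to\langle\mu_s\otimes\mu_s,\psi\rangle$; since the integrand is dominated by $\|f''\|_\infty s^{2H-1}$, which is integrable on $[0,t]$ because $2H-1>0$, dominated convergence yields convergence of the time integral. Letting $n\to\infty$ in (\ref{Ident1}) then shows that $(\mu_t)$ satisfies (\ref{limit}).

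It remains to prove uniqueness, which I expect to be the main obstacle. The plan is to argue through the Cauchy transform $G_t(z)=\int_{\mathbb{R}}(z-x)^{-1}\mu_t(dx)$, $z\in\mathbb{C}^+$. For fixed $z$ the test function $x\mapsto(z-x)^{-1}$ is bounded together with its derivatives, so after a routine approximation by $C_b^2$ functions (treating real and imaginary parts) one may insert $f(x)=(z-x)^{-1}$ into (\ref{limit}). A direct computation gives
\[
\int_{\mathbb{R}^2}\frac{f'(x)-f'(y)}{x-y}\,\mu_s(dx)\mu_s(dy)=-2\,G_s(z)\,\partial_z G_s(z),
\]
so that (\ref{limit}) becomes the complex Burgers equation
\[
\partial_t G_t(z)=-2H\,t^{2H-1}\,G_t(z)\,\partial_z G_t(z),\qquad G_0(z)=\tfrac1z .
\]
The time change $\tau=t^{2H}$ reduces this to $\partial_\tau\widetilde G_\tau=-\widetilde G_\tau\,\partial_z\widetilde G_\tau$ with $\widetilde G_0(z)=1/z$, which is solved by the method of characteristics and has the unique solution $\widetilde G_\tau(z)=\tfrac{1}{2\tau}\bigl(z-\sqrt{z^2-4\tau}\bigr)$, the Cauchy transform of the semicircle law of variance $\tau$. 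Since the Cauchy transform determines a probability measure, this pins down $\mu_t=\mu^{sc}_{t^{2H}}$ and establishes uniqueness.

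Finally, because every subsequential limit coincides with this unique solution, the whole family $\{(\mu_t^{(n)})_{t\ge0}\}$ converges weakly in $C(\mathbb{R}_+,\mathrm{Pr}(\mathbb{R}))$ to $(\mu_t)$. I anticipate that the genuine difficulty lies in the uniqueness step: justifying the resolvent test function and rigorously solving the resulting first-order PDE (selecting the correct branch of the square root and checking that characteristics do not cross in $\mathbb{C}^+$, so that $\widetilde G_\tau$ remains the Cauchy transform of a probability measure) is the part requiring care, whereas the limit identification is essentially a continuity argument resting on Lemmas 1--3.
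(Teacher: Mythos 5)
Your proposal is correct and follows essentially the same route as the paper: tightness plus Prokhorov, passing to the limit in the evolution identity (\ref{Ident1}) with the Skorohod term killed by Lemma 3 and the $O(1/n)$ remainder killed by a direct bound, and uniqueness via the Cauchy--Stieltjes transform reducing (\ref{limit}) to a complex Burgers equation. If anything, your write-up is more explicit than the paper's at the two delicate points — the continuity of $(x,y)\mapsto\frac{f'(x)-f'(y)}{x-y}$ needed to pass to the limit in the nonlinear term, and the time change $\tau=t^{2H}$ with the method of characteristics behind the asserted uniqueness of the solution to the initial value problem (\ref{cst}) — both of which the paper leaves implicit.
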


\begin{proof}
From Proposition 1, we know that the family
$\{(\mu_{t}^{(n)})_{t\geq0}:n\geq1\}$ is relatively compact. Let us now take
a subsequence $\{(\mu_{t}^{(n_{k})})_{t\geq0}:k\geq1\}$ and assume that
it converges weakly to $(\mu_{t})_{t\geq0}$. Therefore, by (\ref{Ident1}),
\begin{align}
\langle\mu_{t}^{(n_{k})},f\rangle &  -\langle\mu_{0}^{(n_{k})},f\rangle
-H\int_{0}^{t}\int_{\mathbb{R}^{2}}\frac{f^{\prime}(x)-f^{\prime}(y)}%
{x-y}s^{2H-1}\mu_{s}^{(n_{k})}(dx)\mu_{s}^{(n_{k})}(dy)ds \nonumber\label{le1}%
\\
&  =\frac{1}{n_{k}^{3/2}}\sum_{i=1}^{n_{k}}\sum_{k\leq h}\int_{0}^{t}%
f^{\prime}(\Phi^{n_{k}}_i(b(s)))\frac{\partial\Phi_{i}^{(n_{k})}}{\partial
b_{kh}^{(n_{k})}}(b(s))\delta b_{kh}(s)\nonumber\\
&  +\frac{H}{n_{k}^{2}}\sum_{i=1}^{n_{k}}\int_{0}^{t}f^{\prime\prime}%
(\Phi^{n_{k}}_i(b(s)))s^{2H-1}ds.
\end{align}
Note that for $0\leq t\leq T$, the following limit holds $\mathbb{P}$ a.s.,
\begin{equation}
\left\vert \frac{H}{n_k^{2}}\sum_{i=1}^{n_k}\int_{0}%
^{t}f^{\prime\prime}(\Phi^{n_k}_i(b(s)))s^{2H-1}ds\right\vert \leq\frac{1}%
{2n_k}T^{2H}\Vert f^{\prime\prime}\Vert_{\infty}\rightarrow0,\qquad\text{as
$k\rightarrow\infty$.} \label{le2}%
\end{equation}
Hence, using (\ref{le2}) and Lemma 3, it is clear that
\begin{align}
\lim_{k\rightarrow\infty}\bigg|\frac{1}{n_{k}^{3/2}}%
\sum_{i=1}^{n_{k}}  &  \sum_{k\leq h}\int_{0}^{t}f^{\prime}(\Phi^{n_{k}%
}_i(b(s)))\frac{\partial\Phi_{i}^{(n_{k})}}{\partial b_{kh}^{(n_{k})}%
}(b(s))\delta b_{kh}(s)\bigg|\nonumber\\
&  +\left\vert \frac{H}{n_{k}^{2}}\sum_{i=1}^{n_{k}}%
\int_{0}^{t}f^{\prime\prime}(\Phi^{n_{k}}_i(b(s)))s^{2H-1}ds\right\vert
=0,\nonumber
\end{align}
in probability, and therefore there exists a subsequence (that without loss
generality we will denote by $(n_{k})_{k\geq0}$) such that the same limit
holds $\mathbb{P}$ a.s.\newline Therefore, using (\ref{le1})
\begin{align}
\langle\mu_{t},f\rangle &  -\langle\mu_{0},f\rangle-H\int_{0}^{t}%
\int_{\mathbb{R}^{2}}\frac{f^{\prime}(x)-f^{\prime}(y)}{x-y}s^{2H-1}\mu
_{s}(dx)\mu_{s}(dy)ds\nonumber\\
&  =\lim_{k\rightarrow\infty}\langle\mu_{t}^{(n_{k})},f\rangle-\langle\mu
_{0}^{(n_{k})},f\rangle-H\int_{0}^{t}\int_{\mathbb{R}^{2}}\frac{f^{\prime
}(x)-f^{\prime}(y)}{x-y}s^{2H-1}\mu_{s}^{(n_{k})}(dx)\mu_{s}^{(n_{k}%
)}(dy)ds=0.\nonumber
\end{align}
Then we can conclude that any weak limit $(\mu_{t})_{t\geq0}$ of a subsequence
$(\mu_{t}^{(n_{k})})_{t\geq0}$ should satisfy (\ref{limit}).
\par  Applying
(\ref{limit}) to the deterministic sequence of functions
\[
f_{j}(x)=\frac{1}{x-z_{j}},\qquad\text{$z_{j}\in(\mathbb{Q}\times
\mathbb{Q})\cap\mathbb{C}^{+}$,}%
\]
and using a continuity argument, we get that the Cauchy--Stieltjes transform
$(G_{t})_{t\geq0}$ of $(\mu_{t})_{t\geq0}$ satisfies the integral equation
\begin{equation}
G_{t}(z)=-\int_{\R}\frac{\mu_0(dz)}{x-z}+H\int_{0}^{t}s^{2H-1}ds\int_{\mathbb{R}^{2}}\frac
{\mu_{s}(dx)\mu_{s}(dy)}{(x-z)(y-z)^{2}},\qquad\text{$t\geq0,z\in
\mathbb{C}^{+}$.} \label{c-st}%
\end{equation}
From (\ref{c-st}) it is easily seen that $G_{t}$ is the unique solution to the
initial value problem
\begin{equation}\label{cst}
\begin{cases}
\frac{\partial}{\partial t}G_{t}(z)=Hs^{2H-1}G_{t}(z)\frac{\partial}{\partial
z}G_{t}(z), & \qquad\text{$t>0$,}\\
G_{0}(z)=-\displaystyle \int_{\R}\frac{\mu_0(dx)}{x-z}, & \qquad\text{$z\in\mathbb{C}^{+}$}.
\end{cases}
\end{equation}
Therefore all limits of subsequences of $(\mu_{t}^{(n)})_{t\geq0}$ coincide with the family $(\mu_{t})_{t\geq0}$, with Cauchy--Stieltjes transform  given as the solution to (\ref{cst}), and thus the
sequence $\{(\mu_{t}^{(n)})_{t\geq0}:n\geq1\}$ converges weakly to $(\mu
_{t})_{t\geq0}$.
\end{proof}

\section{Convergence to a non-commutative fractional Brownian motion}

In this section, we prove that the deterministic process $(\mu_{t})_{t\geq0}$
corresponds to the law of a non-commutative fractional Brownian motion. The
intuitive idea is as follows: by the tightness of the sequence of processes
$\{(\mu_{t}^{(n)})_{t\geq0};n\geq1\}$ in the space $C(\mathbb{R}%
_{+},\mathrm{Pr}(\mathbb{R}))$, the weak limit $(\mu_{t})_{t\geq0}$ of any
subsequence $\{(\mu_{t}^{(n_{k})})_{t\geq0};k\geq1\}$ should satisfy
(\ref{limit}) for any $t\geq0$ and $f\in C_{b}^{2}$.\newline Therefore by the
uniqueness of solutions to Equation (\ref{c-st}), it is easy to check that
\[
G_{t}(z)=\frac{1}{2t^{2H}}\left(  \sqrt{z^{2}-4t^{2H}}-z\right)
,\qquad\text{$t>0$, $z\in\mathbb{C}^{+},$}%
\]
which is the Cauchy--Stieltjes transform of a semi-circle law with variance at
time $t>0$ given by $t^{2H}$, and hence the law of a non-commutative
fractional Brownian motion at time $t$.

\begin{proof}
[Proof of Theorem \ref{limit2}]
Let us recall that the sequence of processes $\{(\mu^{(n)}_{t})_{t\geq0}%
:n\geq1\}$ is tight in $C(\mathbb{R}_{+},\mathrm{Pr}(\mathbb{R}))$. This
implies that the sequence is relatively compact, in other words, there exists
a subsequence $\{(\mu^{(n_{k})}_{t})_{t\geq0}:k\geq1\}$ that converges weakly
to a process that we denote by $(\mu_{t})_{t\geq0}$ in $C(\mathbb{R}%
_{+},\mathrm{Pr}(\mathbb{R}))$.

Given the fact that the weak convergence of processes in ${C}(\mathbb{R}%
_{+},\mathrm{Pr}(\mathbb{R}))$ implies the convergence of the
finite-dimensional distributions, then for each bounded and continuous
function $g:\mathbb{R}^{m}\rightarrow\mathbb{R}$, and for each sequence of
times $0\leq t_{1}<\dots<t_{m}$, it follows that
\begin{equation}
\langle\mu_{t_{1},\dots,t_{m}}^{(n_{k})},g\rangle\overset{\mathcal{L}%
}{\rightarrow}\langle\mu_{t_{1},\dots,t_{m}},g\rangle,\qquad\text{ as
$k\rightarrow\infty$.} \label{conv}%
\end{equation}
Let us now consider $\{B^{(n_{k})}(t), t\ge0\}$ the symmetric fractional
Brownian matrix, such that the empirical measure of its eigenvalues (see
(\ref{em})) is given by $(\mu^{(n_{k})}_{t})_{t\geq0}$. 
\par First we will prove that the deterministic process $(\mu_t)_{t\geq0}$ corresponds to the law
of a semicircle process. To this end consider a set of points in time $0\leq t_{1}<\dots<t_{m}$ and $\lambda_{1}%
,\dots,\lambda_{m}\in\mathbb{R}$.  Then for any polynomial  $Q$, we have
\[%
\begin{split}
\mathbb{E}\left[\frac{1}{n_k}\mathbf{tr}\left(Q\left(\sum_{i=1}^m\lambda_{i}B^{(n_{k})}(t_{i})\right)\right)\right]= \int_{\mathbb{R}^{m}}Q\left(\sum_{i=1}^m\lambda_{i}x_{i}\right) \mu^{n_k}_{t_1,\dots,t_n}(dx_1,\dots,dx_m).
\end{split}
\]
Therefore
\begin{align}\label{semi}
\lim_{k\to\infty}\mathbb{E}\left[\frac{1}{n_k}\mathbf{tr}\left(Q\left(\sum_{i=1}^m\lambda_{i}B^{(n_{k})}(t_{i})\right)\right)\right]=\int_{\mathbb{R}^{m}}Q\left(\sum_{i=1}^m\lambda_{i}x_{i}\right)\mu
_{t_{1},t_{2},\dots,t_{m}}(dx_{1}, \dots, dx_{m}).
\end{align}

From Theorem 2.2 in \cite{vu}, we know that the random matrix
\[
X^{(n_k)}=\lambda_1B^{(n_k)}(t_1)+\dots+\lambda_mB^{(n_k)}(t_m),
\]
has a limit distribution, $\tilde{\mu}$, which is a semicircle law. Hence using (\ref{semi}) we obtain that
\[
\int_{\R}Q(x)\tilde{\mu}(dx)=\int_{\mathbb{R}^{m}}Q(\lambda_{1}x_{1}+\dots+\lambda_{m}x_{m})\mu
_{t_{1},t_{2},\dots,t_{m}}(dx_{1}, dx_{2}, \dots, dx_{m}).
\]
So if we define the function $h:\R^m\to\R$ by 
\[
h(x_1,\dots,x_m)=\sum_{i=1}^m\lambda_ix_i.
\]
Then the distribution $\mu_{t_1,\dots,t_n}\circ h^{-1}$ has a semicircle law. Therefore the process $(\mu
_{t})_{t\geq0}$ is the law of a semicircular process.
\par Now we proceed to identify the limit as the law of a non-commutative
fractional Brownian motion. So first we will prove that $(\mu_{t})_{t\geq0}$ corresponds to the law of a centered semicircular process. To this end for $t\geq 0$, we obtain, using (\ref{limit}) (with $f(x)=x$) the following
\begin{align}
\int_{\mathbb{R}}x\mu_{t}(dx) =\langle x,\delta_0\rangle=0.
\end{align}
Therefore, $(\mu_{t})_{t\geq0}$ is the law of a centered semicircular process.
\par Finally in order to conclude the proof we compute the covariance: for $t\geq s\geq0$, we obtain 
\begin{align}
\int_{\mathbb{R}^{2}}(xy)\mu_{s,t}(dx,dy)&=\lim_{k\to\infty}\int_{\mathbb{R}^{2}}(xy)\mu^{n_k}_{s,t}(dx,dy)\notag\\
&=\lim_{k\to\infty}\mathbb{E}\left[\frac{1}{n_k}\mathbf{tr}\left(B^{(n_k)}(t)B^{(n_k)}(s)\right)\right]\notag\\
&  =\frac{1}{2}\left(  t^{2H}+s^{2H}-|t-s|^{2H}\right)  .\nonumber
\end{align}
Noting that this holds for any subsequence, we can conclude that the
whole sequence $\{(\mu^{(n)}_{t})_{t\geq0}:n\geq1\}$ converges in law to the
deterministic process $(\mu_{t})_{t\geq0}$ which is characterized by being the
law of a non-commutative fractional Brownian motion.
\end{proof}

\end{document}